\newtheorem{thm}{Theorem}
\newtheorem{cor}{Corollary}
\newtheorem{lem}{Lemma}
\newtheorem{rem}{Remark}
\begin{document}
\title[Liouville property for  a  semilinear  heat equation]{A Liouville property for eternal solutions to a  supercritical semilinear heat equation}
\author{Christos Sourdis}
\address{University of Athens, Greece.
}
\email{sourdis@uoc.gr}

\date{\today}
\begin{abstract}
We are concerned with  solutions to the nonlinear heat equation
$u_t=\Delta u+|u|^{p-1}u$, $x\in \mathbb{R}^N$, that are defined for all positive and negative time.
If the exponent $p$ is greater or equal to the Joseph-Lundgren exponent $p_c$
and $|u|$ stays below some positive radially symmetric  steady state, under a mild   condition on the behaviour of $u$ as $|x|\to \infty$,  we  show that $u$ is independent of time. Our method of proof uses Serrin's sweeping principle, based on the strong maximum principle, applied to the linearized equation for $u_t$.
 Our result covers that of   Pol\'{a}\v{c}ik and Yanagida [JDE (2005)] who had further assumed that the solution stays above some positive radial steady state and $p>p_c$.   In contrast, they relied on the use of similarity variables and invariant manifold ideas. Remarkably, to the best of our knowledge,  a corresponding Liouville property was previously missing for $p=p_c$. We emphasize that such Liouville type theorems imply the quasiconvergence of a class of solutions to the corresponding Cauchy problem.
As our viewpoint originates from the study of elliptic problems, we can prove new rigidity results for the corresponding steady state problem that are motivated
by the aforementioned ones for the parabolic flow. \end{abstract}
\subjclass[2010] {35K58, 35B08, 35B50, 35B53, 35J61}

\keywords{Semilinear heat equation, eternal solutions, quasiconvergence, Liouville property, maximum principle}
 \maketitle

\section{Introduction}\subsection{Motivation and known results}\label{subsecMotiv8}\subsubsection{The Cauchy problem for $p>1$}\label{subsub1}
A lot of studies have been devoted to the asymptotic behaviour as $t\to +\infty$ of solutions to the Cauchy problem:
\begin{equation}\label{eqCauchy}
\left\{\begin{array}{l}
        u_t=\Delta u+|u|^{p-1}u,\ \ x\in\mathbb{R}^N,\ t>0, \\
        u(x,0)=u_0(x), \ x\in\mathbb{R}^N,
      \end{array}
\right.
\end{equation}
with $N\geq 1$, $p>1$ and $u_0$ continuous such that $u_0(x)\to 0$ as $|x|\to \infty$.
We refer the interested reader to the monograph \cite{qs}  for an up to date account.
Despite of its simple appearance, this problem  is quite challenging and provides a rich source  of interesting phenomena. Moreover, its studies   can give insights to more complicated problems in mathematical physics.

We  point out that (\ref{eqCauchy}) is (locally) well posed in $L^\infty(\mathbb{R}^N)$ as well as in $L^q(\mathbb{R}^N)$ with $q\geq 1$ and $q>N(p-1)/2$
(see \cite[Ch. II]{qs});
a detailed study of the regularity properties of the solutions can be found in the monographs \cite{lad} and \cite{L}. Of particular interest are nonnegative initial functions $u_0$, which as is well known become immediately positive by the strong maximum principle (see for instance \cite[Subsec. 6.4.2]{Evans}),
but we will not restrict our attention to just these.

Let us briefly highlight some of the main known qualitative properties of global solutions to (\ref{eqCauchy}),  i.e., those that are defined for all $t>0$.
For $1<p \leq p_F=1+2/N$ there are no
global positive  solutions (see   \cite[Sec. 9.4]{Evans} and \cite[Sec. II.18]{qs}). If $1<p<p_S$, where
\[p_S=\frac{N+2}{N-2}\ \ \textrm{if}\ N\geq 3,\ \ p_S=\infty \ \textrm{if}\ N=1,2,\] stands for the critical Sobolev exponent, then as $t\to \infty$ positive global solutions converge to zero (uniformly in $\mathbb{R}^N$) at least if their initial condition  $u_0\geq 0$
is  square integrable (see \cite{Sou}). As shown in \cite[Prop. 3.3]{pg}, if $u_0\geq 0$ is bounded, convergence of global solutions to zero follows if $p\in (1,p_S)$ is such that the parabolic PDE does not possess positive, bounded solutions on $\mathbb{R}^N\times \mathbb{R}$ (see also Subsection \ref{subsubQuasi} below for this point of view). Remarkably, the latter \emph{Liouville property} was proven for $p$ in the full subcritical  range $(1,p_S)$ only very recently in \cite{quinteras2020}, where we also refer for the fascinating history of this important result. In the critical case $p=p_S$, under certain technical assumptions, it was shown recently in \cite{MM} that radial global solutions converge in the energy space (see (\ref{eqEnergia22}) below) either to zero or  to a so called 'tower of bubbles' with the adjacent bubbles having opposite sign (see also \cite{delCIME} for the construction of such solutions).
 On the other hand, very little seems to be known concerning the asymptotic behaviour of solutions in the case of  supercritical exponent $p>p_S$ (especially when dealing with nonradial solutions).
 To illustrate a major difficulty that arises in this regime, let us mention that by Pohozaev's identity the (nontrivial) radial steady state solutions, which exist only if $p\geq p_S$, have finite energy
 \begin{equation}\label{eqEnergia22}
   E(u)=\int_{\mathbb{R}^N}^{}\left\{\frac{|\nabla u|^2}{2}-\frac{|u|^{p+1}}{p+1} \right\}dx
 \end{equation}
  only if
 $p=p_S$.
As is well known, when $E$ is finite along a  solution of (\ref{eqCauchy}) it serves as a Lyapunov functional. The invariance principle \cite{H} then implies that if such solutions are global and bounded,
they are \emph{quasiconvergent} in the sense that their $\omega$-limit set consists of steady states.

\subsubsection{The radial steady states for $p\geq p_c$}\label{subsubRadialpc}Our motivation for the current work comes from the paper \cite{py}, where the authors consider the case where the exponent $p$ is strictly larger than the Joseph-Lundgren exponent
\begin{equation}\label{eqJL}
  p_c=\left\{\begin{array}{ll}
               \frac{(N-2)^2-4N+8\sqrt{N-1}}{(N-2)(N-10)}=1+\frac{4}{N-4-2\sqrt{N-1}}
&\textrm{if}\ N> 10, \\
               \infty &\textrm{if}\ N\leq 10,
             \end{array}
   \right.
\end{equation}
and the initial condition $u_0$ is restricted to be nonnegative.

In order to describe their results, and for future purposes, let us briefly recall mainly from \cite{gui,wang} some basic facts for   the   steady state problem:
\begin{equation}\label{eqSS}
  \Delta u+u^p=0,\ u>0, \ x\in \mathbb{R}^N,\ \textrm{with}  \ p\geq p_c.
\end{equation}
There is a one-parameter family $\{\varphi_\alpha,\ \alpha>0\}$ of radially symmetric solutions to (\ref{eqSS}), given by
\begin{equation}\label{eqRescal}
\varphi_\alpha(x)=\alpha \Phi\left(\alpha^{(p-1)/2}|x| \right),
\end{equation}
where $\Phi=\Phi(r)$, $r=|x|$, is the (unique) radial steady state with $\Phi(0)=1$; it is decreasing in $r>0$ and satisfies $\Phi(r)\to 0$ as $r\to \infty$.
Moreover, the following monotonicity property holds:
\begin{equation}\label{eqMonot}
  \frac{\partial}{\partial\alpha}\varphi_\alpha(r)>0,\ r\geq0,\ \alpha>0,
\end{equation}
 (see also  \cite[Lem. 2.3]{fila}).
In fact, as $\alpha\to \infty$, the solution $\varphi_\alpha$ converges  to the singular steady state
\begin{equation}\label{eqSingular}
  \varphi_\infty(x)=L|x|^{-2/(p-1)}\ \ \textrm{with}\ \ L=\left(\frac{2}{p-1}\left(N-2-\frac{2}{p-1}\right) \right)^{1/(p-1)}.
\end{equation}
We also note that   for each $\alpha>0$ one has
\begin{equation}\label{eqAs}
  \varphi_\alpha(x)=\left\{\begin{array}{ll}
                             \frac{L}{|x|^m}+a\frac{1}{|x|^{m+\lambda_1}}+o\left(\frac{1}{|x|^{m+\lambda_1}}\right) & \textrm{if}\ p>p_c, \\
                               &   \\
                             \frac{L}{|x|^m}+b\frac{ \ln |x|}{|x|^{m+\lambda_1}}+O\left(\frac{1}{|x|^{m+\lambda_1 }}\right) & \textrm{if}\ p=p_c,
                           \end{array}
\  \ \ \textrm{as}\ |x|\to \infty,
\right.\end{equation}
where \begin{equation}\label{eqmm}
        m=\frac{2}{p-1},
      \end{equation} $\lambda_1>0$ is as in (\ref{eqlambda1}) below, and $a,b<0$ are constants that depend on $\alpha>0$ (see also  \cite[Prop. 2.3]{gnw22}, and \cite[Prop. 2.1]{Seki} for the explicit dependence of $a,b$ on $\alpha$).
We note that the kernel of the linearization of (\ref{eqSS}) on the singular solution $\varphi_\infty$, when restricted to the radial class, is spanned by
\begin{equation}\label{eqKernelSing}
  \begin{array}{ll}
     \frac{1}{r^{m+\lambda_1}}\ \textrm{and}\ \frac{1}{r^{m+\lambda_2}}\ \textrm{for\ a}\ \lambda_2>\lambda_1 & \textrm{if}\ p>p_c; \\
       &   \\
     \frac{1}{r^{m+\lambda_1}}\ \textrm{and}\ \frac{\ln r}{r^{m+\lambda_1}} & \textrm{if}\ p=p_c.
   \end{array}
\end{equation}
Actually, $\lambda_2$ coincides with $\lambda_1$ if $p=p_c$.
For future reference, let us note that
\begin{equation}\label{eqLambda12}
\lambda_1>2,\end{equation}
see Remark \ref{remLambdas} below.

The monotonicity property (\ref{eqMonot}) implies easily by the maximum principle that the radial (regular) steady states are (locally) stable equilibria for (\ref{eqCauchy}) under compactly supported perturbations (with respect to the uniform norm). The stability of these equilibria under general perturbations has been established with respect to suitable weighted uniform norms in \cite{gui,gnw22,wang}. On the other hand, a corresponding stability property with respect to the usual $L^2$ norm has been obtained in \cite{PYA} for initial conditions that lie below the radial singular steady state in absolute value. We note in passing that analogous positive radially symmetric steady states
also exist if $p\in (p_S,p_c)$, however they are unstable 'in any reasonable sense' (see the aforementioned references for more details). \subsubsection{On the radial symmetry of positive steady states for $p>1$}\label{subsubRadialp1} For future purposes, let us mention that solutions of (\ref{eqSS}) with $p>p_S$ are radially symmetric about some point if they satisfy
\begin{equation}\label{eqZOU}
  |x|^mu(x)\to L\ \ \textrm{as}\ \ |x|\to \infty
\end{equation}
 for  $p\geq \frac{N}{N-4}$ ($N\geq 5$) and $p\in \left(p_S,\frac{N+1}{N-3}\right)$, see \cite{GUO} and \cite{ZOU} respectively. For $p\in \left[\frac{N+1}{N-3},\frac{N}{N-4} \right)$ ($N\geq 4$), it was shown in the former reference that   they are radially symmetric about some point
if and only if they satisfy (\ref{eqZOU}) and
\begin{equation}\label{eqZOU22}
|x|^{\frac{N}{2}-m-1}\left(|x|^mu(x)-L \right)=0\ \ \textrm{as}\ |x|\to \infty.
\end{equation}

 We emphasize that the positivity of $u$ played a crucial role in the proofs of the aforementioned
results since they relied on the method of \emph{moving planes} (see for instance the review \cite{pacella}), after establishing an accurate asymptotic expansion for $u$ at infinity (see also \cite{kelei}). For completeness, let us point out that (\ref{eqSS}) is fully understood if $1<p\leq p_S$ (again by the method of moving planes, see \cite{ChenLi}). More specifically,  there are no solutions if $1<p<p_S$, while if $p=p_S$ all solutions are radially symmetric about some point and can be given explicitly (the so called 'bubbles').

\subsubsection{Quasiconvergence and the Liouville property}\label{subsubQuasi}
We can now present the results of \cite{py} which motivated the current work.

\begin{thm}\label{thm1}\cite{py}
Assume that $p>p_c$ and let $u_0\in C(\mathbb{R}^N)$ satisfy
\begin{equation}\label{eqOrder}
  \varphi_\alpha(x)\leq u_0(x)\leq \varphi_\beta(x),\ \ x\in\mathbb{R}^N,
\end{equation}
for some $0<\alpha<\beta\leq \infty$. Then
\[
\omega(u_0)\subseteq\{\varphi_\gamma\ :\ \alpha\leq \gamma \leq \beta \},
\]
where $\omega(u_0)$ stands for the $\omega$-limit set of the solution $u(\cdot,t,u_0)$ of (\ref{eqCauchy}):
\[
\omega(u_0)=\left\{\phi\ : \ u(\cdot, t_n,u_0)\to \phi \ \textrm{for some sequence}\ t_n\to \infty \right\},
\]
with the convergence taking place in the supremum norm.
\end{thm}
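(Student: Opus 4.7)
The plan is to use the parabolic comparison principle to propagate (\ref{eqOrder}) forward in time, and then translate the description of $\omega(u_0)$ into a Liouville-type question for bounded entire orbits. Concretely, since $\varphi_\alpha$ and $\varphi_\beta$ (or $\varphi_\infty$ if $\beta=\infty$, interpreted as a generalized supersolution on $\mathbb{R}^N\setminus\{0\}$) are (super)solutions of (\ref{eqCauchy}), the comparison principle propagates (\ref{eqOrder}) to
\[
\varphi_\alpha(x)\le u(x,t,u_0)\le\varphi_\beta(x),\quad x\in\mathbb{R}^N,\ t>0.
\]
Interior parabolic regularity and the uniform decay of $\varphi_\alpha,\varphi_\beta$ at infinity make the orbit $\{u(\cdot,t,u_0):t\ge 1\}$ relatively compact in the supremum norm, so $\omega(u_0)$ is nonempty and every $\phi\in\omega(u_0)$ inherits the sandwich. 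A standard diagonal extraction along a sequence $t_n\to\infty$ then produces, for each such $\phi$, an entire solution $U:\mathbb{R}^N\times\mathbb{R}\to\mathbb{R}$ of $u_t=\Delta u+|u|^{p-1}u$ with $U(\cdot,0)=\phi$ and $\varphi_\alpha\le U(\cdot,t)\le\varphi_\beta$ for all $t\in\mathbb{R}$.

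The heart of the proof is to show that any such $U$ must be time-independent and must equal some $\varphi_\gamma$. For time-independence I would attempt a sweeping argument \`a la Serrin: set
\[
\gamma^*=\sup\bigl\{\gamma\in[\alpha,\beta]:\varphi_\gamma(x)\le U(x,t)\text{ for all }(x,t)\in\mathbb{R}^N\times\mathbb{R}\bigr\},
\]
so that $W=U-\varphi_{\gamma^*}\ge 0$ solves a linear parabolic equation with bounded continuous zero-order coefficient. The strong maximum principle then forces either $W\equiv 0$ (so $U\equiv\varphi_{\gamma^*}$ and we are done) or $W>0$ everywhere, in which case the asymptotics (\ref{eqAs}) with $\lambda_1>2$ from (\ref{eqLambda12}) and the monotonicity (\ref{eqMonot}) should allow one to slide $\gamma^*$ upward by a small $\varepsilon$ while preserving $\varphi_{\gamma^*+\varepsilon}\le U$, contradicting maximality. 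The approach of \cite{py}, for which the theorem is quoted, is a variant via similarity variables adapted to the scaling $u(x,t)\mapsto\lambda^{2/(p-1)}u(\lambda x,\lambda^2 t)$: the entire orbit becomes a bounded trajectory trapped on a finite-dimensional center manifold of the linearization at $\varphi_\infty$, and since $p>p_c$ makes the non-central spectrum strictly dissipative while the family $\{\varphi_\gamma\}$ foliates the center manifold by equilibria, the orbit is forced to be an equilibrium. Once $U\equiv\phi$ is known to be a steady state with $\varphi_\alpha\le\phi\le\varphi_\beta$, the expansions (\ref{eqAs}) force $|x|^m\phi(x)\to L$, i.e.\ condition (\ref{eqZOU}); since $p>p_c$ implies $p\ge N/(N-4)$, the symmetry result of \cite{GUO} gives that $\phi$ is radial about some point $x_0$. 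A Taylor expansion shows that $|x|^m\varphi_\gamma(|x-x_0|)-L$ carries a leading $mL(e\cdot x_0)/|x|$ term with $e=x/|x|$, which by $\lambda_1>2$ dominates the $|x|^{-\lambda_1}$ decay of $|x|^m\varphi_\alpha(|x|)-L$; the sandwich then forces $e\cdot x_0\ge 0$ for every unit $e$, hence $x_0=0$, and the unique solvability of the radial ODE with prescribed value at the origin identifies $\phi=\varphi_{\phi(0)}$ with $\phi(0)\in[\alpha,\beta]$.

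The main obstacle is the Liouville step. The sweeping argument above requires delicate rate-matching at spatial infinity: by the sandwich and (\ref{eqAs}), both $W$ and the family differences $\varphi_{\gamma^*+\varepsilon}-\varphi_{\gamma^*}$ decay like $|x|^{-m-\lambda_1}$, so to slide $\gamma^*$ one has to compare the decay of $W$ against that of the tangent field $\partial_\gamma\varphi_{\gamma^*}$ and rule out a boundary-at-infinity loss. The invariant-manifold machinery of \cite{py} is designed precisely for this matching, and its reliance on $p>p_c$ is essential because the relevant decay exponents degenerate at $p=p_c$. Extending the result to the borderline case $p=p_c$, and to solutions not assumed to lie above any $\varphi_\alpha$, is the main contribution of the present note.
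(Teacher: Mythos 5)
Your opening paragraph reproduces the paper's own proof of this theorem: propagate the sandwich (\ref{eqOrder}) forward by the comparison principle, use interior parabolic regularity and Henry's invariance principle to realize every $\phi\in\omega(u_0)$ as the time-zero slice of an eternal solution $U$ with $\varphi_\alpha\le U(\cdot,t)\le\varphi_\beta$ for all $t\in\mathbb{R}$, and then invoke the Liouville property, Theorem~\ref{thm2} of \cite{py}, to conclude $U\equiv\varphi_\gamma$. That is exactly the reduction the paper gives, and it is complete as stated.

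The sweeping route you then sketch as a possible self-contained proof of the Liouville step has a genuine gap, and you flag the correct reason yourself: $W=U-\varphi_{\gamma^*}$ and the increment $\varphi_{\gamma^*+\varepsilon}-\varphi_{\gamma^*}$ both decay at the critical rate $|x|^{-m-\lambda_1}$, leaving no slack to slide $\gamma^*$ at spatial infinity (and there is also a noncompactness in the time direction to control before the strong maximum principle can be brought to bear). This is precisely the obstruction that led \cite{py} to dismiss a direct sweeping argument. Note, however, that the paper's own new proof (Theorem~\ref{thmMine}, which subsumes Theorem~\ref{thm2} and extends it to $p=p_c$) is \emph{not} the invariant-manifold route but is indeed a sweeping argument --- applied not to $U-\varphi_{\gamma^*}$ but to $v=u_t$ in the linearized equation, with the positive kernel element $Z$ (or $Z_\infty$) as supersolution. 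The crucial gain, recorded in Corollary~\ref{cor1}, is that the leading $L|x|^{-m}$ contributions cancel in $u_t=\Delta u+|u|^{p-1}u$, so $u_t=o\left(|x|^{-m-\lambda_1}\right)$ uniformly in $t$, strictly faster than $Z$; that extra decay makes $v/Z\to 0$ at infinity and lets the sweep close, whereas $U-\varphi_{\gamma^*}$ has no such gain. Sweeping in the linearization on $u_t$ rather than in the nonlinear equation on $U$ is the essential idea your attempt misses. Your closing radial-identification step via \cite{GUO} and the forced $x_0=0$ is a valid observation but is redundant here, since Theorem~\ref{thm2} already outputs $U\equiv\varphi_\gamma$.
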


We insist that the above result is nontrivial since the usual Lyapunov functional could be infinite along $u$ (due to its slow algebraic spatial decay).
Actually, the main effort in the aforementioned reference was placed in establishing the following \emph{Liouville type property}, which implies directly the above \emph{quasiconvergence}
result (see also the discussion that follows).
\begin{thm}\cite{py}\label{thm2}
If $p>p_c$, $0<\alpha<\beta\leq \infty$, and $u$ is a (classical) solution of
\begin{equation}\label{eqEq}
  u_t=\Delta u+|u|^{p-1}u
\end{equation}
on $\mathbb{R}^N\times (-\infty,0]$,
satisfying
\begin{equation}\label{eqSandwich9}
  \varphi_\alpha\leq u(\cdot,t)\leq \varphi_\beta\ \ \textrm{for all}\ t<0,
\end{equation}
then $u\equiv \varphi_\gamma$ for some $\gamma>0$.
\end{thm}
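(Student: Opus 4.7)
My plan is to prove that $u_t\equiv 0$ via Serrin's sweeping principle applied to the linear equation
\begin{equation*}
(\partial_t-\Delta-pu^{p-1})v=0
\end{equation*}
satisfied by $v:=u_t$, and then to deduce that the resulting positive stationary $u$ coincides with some $\varphi_\gamma$ using the radial symmetry result for (\ref{eqSS}) recalled in Subsection \ref{subsubRadialp1} together with the expansion (\ref{eqAs}). The natural barriers are $\psi_\gamma:=\partial_\gamma\varphi_\gamma>0$ (cf.\ (\ref{eqMonot})), which solve the elliptic linearization $\Delta\psi_\gamma+p\varphi_\gamma^{p-1}\psi_\gamma=0$ and, by differentiating (\ref{eqAs}), decay like $c_\gamma|x|^{-(m+\lambda_1)}$ at infinity. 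The key identity is
\begin{equation*}
(\partial_t-\Delta-pu^{p-1})\psi_\gamma = p\psi_\gamma(\varphi_\gamma^{p-1}-u^{p-1}),
\end{equation*}
so for $\gamma=\beta<\infty$ (the case $\beta=\infty$ requires a minor modification) $\psi_\beta$ is a supersolution of the linearization around $u$, and consequently so is $K\psi_\beta\pm u_t$ for every $K\geq 0$.

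\textbf{The sweep.} Combining the sandwich with parabolic Schauder estimates, applied to the rescaling $\tilde w(y,s):=|x_0|^{m+\lambda_1}(u-L|\cdot|^{-m})(x_0+|x_0|y,t_0+|x_0|^2 s)$ near any point $x_0$ with $|x_0|$ large (which exploits the common leading asymptotic $L|x|^{-m}$ of $\varphi_\alpha,u,\varphi_\beta$ and the fact that the rescaled linearized coefficient $\lambda^2 pL^{p-1}|x|^{-2}$ is bounded), one obtains $|u_t(x,t)|=O(|x|^{-(m+\lambda_1+2)})$ uniformly for $t\leq -1$, which is strictly better decay than $\psi_\beta$. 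Together with the uniform bound $|u_t|\leq C$ on compact spatial sets, this yields $|u_t|\leq K_0\psi_\beta$ on $\mathbb{R}^N\times(-\infty,0]$ for some $K_0>0$. Define
\begin{equation*}
K_*:=\inf\bigl\{K\geq 0\,:\,K\psi_\beta\geq u_t\ \text{on}\ \mathbb{R}^N\times(-\infty,0]\bigr\}.
\end{equation*}
The aim is to show $K_*=0$; applying the same argument to $-u_t$ with the supersolution $K\psi_\beta+u_t$ then gives $u_t\equiv 0$.

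\textbf{Ruling out escape.} Suppose $K_*>0$. By definition, there is a sequence $(x_n,t_n)$ with $K_*\psi_\beta(x_n)-u_t(x_n,t_n)\to 0$. Spatial escape $|x_n|\to\infty$ is immediately excluded by $u_t/\psi_\beta=O(|x|^{-2})\to 0$. If $(x_n,t_n)$ has a limit point $(x_*,t_*)\in\mathbb{R}^N\times(-\infty,0]$, then $K_*\psi_\beta-u_t\geq 0$ is a supersolution of the linearization around $u$ attaining an interior zero; the parabolic strong maximum principle forces $u_t\equiv K_*\psi_\beta$ in the parabolic past of $(x_*,t_*)$, and integration in $t$ gives $u(x,t)\to-\infty$ as $t\to-\infty$, contradicting $u\geq\varphi_\alpha>0$. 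Finally, if $t_n\to-\infty$ with $x_n$ bounded, the uniform sandwich and parabolic regularity yield (along a subsequence) a limit eternal solution $\tilde u$ on $\mathbb{R}^N\times\mathbb{R}$ satisfying the same sandwich and the same bound $\tilde u_t\leq K_*\psi_\beta$, with equality at an interior point; this reduces to the previous case and again contradicts the sandwich. Hence $K_*=0$.

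\textbf{Identification and main obstacle.} With $u_t\equiv 0$, $u$ is a positive solution of (\ref{eqSS}) trapped between $\varphi_\alpha$ and $\varphi_\beta$, so $|x|^mu(x)\to L$ by squeezing. Since $p>p_c$ forces $p>N/(N-4)$, the moving-plane result of \cite{GUO} gives $u(x)=\tilde\varphi_\gamma(x-x_0)$ for some $\gamma>0$ and some $x_0\in\mathbb{R}^N$. The translation must vanish: a nonzero $x_0$ would produce a direction-dependent correction of order $|x|^{-(m+1)}$ in $u-L|x|^{-m}$, but the sandwich together with (\ref{eqAs}) confines that correction to order $|x|^{-(m+\lambda_1)}$, and $\lambda_1>2$ by (\ref{eqLambda12}). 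Therefore $u\equiv\varphi_\gamma$. The hardest step, I expect, will be the bootstrapped decay $|u_t|=O(|x|^{-(m+\lambda_1+2)})$ uniformly in $t$, since this requires combining the sandwich asymptotics (\ref{eqAs}) with parabolic Schauder theory on the $w$-equation; once this decay and the compactness/blow-up reduction at $t=-\infty$ are in place, the sweeping is a standard parabolic adaptation of Serrin's principle.
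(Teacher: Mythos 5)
Your proposal is correct and follows essentially the same route as the paper's proof of its generalizing Theorem~\ref{thmMine}: Serrin's sweeping principle applied to $v=u_t$ in the linearized equation (\ref{eqv}), with the positive kernel element $\partial_\gamma\varphi_\gamma\big|_{\gamma=\beta}$ (the paper's $Z$, up to a constant) as the supersolution barrier, after establishing a decay bound for $u_t$ at spatial infinity via parabolic regularity on a rescaled cylinder, ruling out escape by the strong maximum principle plus time-translation compactness, and identifying the resulting positive steady state by the moving-plane theorem of \cite{GUO} together with the sandwich forcing the center of symmetry to the origin. The only cosmetic differences are that you derive the decay by Schauder estimates on the rescaled $\tilde w$ (obtaining the sharper $O(|x|^{-(m+\lambda_1+2)})$, which the full sandwich (\ref{eqSandwich9}) indeed provides) whereas the paper uses the iterated interpolation Lemma~\ref{lemBBH} to get the weaker but sufficient $o(|x|^{-(m+\lambda_1)})$ under the more general hypothesis (\ref{eqlower}), and in the interior-touching case you run the integration-in-$t$ contradiction, which the paper records as its stated alternative to the quotient contradiction.
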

As was mentioned in \cite{py}, the solution of the Cauchy problem (\ref{eqCauchy}) with initial condition as in (\ref{eqOrder}) exists globally and satisfies (\ref{eqSandwich9}) for all $t>0$. Therefore, one may assume that the conditions in Theorem \ref{thm2} hold for  $t\in \mathbb{R}$; such solutions are frequently called \emph{eternal}. In fact, by the invariance principle \cite{H}, the Cauchy problem for (\ref{eqEq}) with initial condition in $\omega(u_0)$, where $u_0$ is as in (\ref{eqOrder}), admits an eternal solution that satisfies (\ref{eqSandwich9}) for all $t\in \mathbb{R}$. So, Theorem \ref{thm1} follows directly from Theorem \ref{thm2}.

To    prove the latter, the authors of \cite{py}  first reformulated the problem in self-similar variables as in \cite{GK} (the so called Type I scaling). In these variables, each $\varphi_\alpha$ with $0<\alpha<\infty$ corresponds to a monotone heteroclinic connection between $\varphi_\infty$ and zero. The two sided bound (\ref{eqSandwich9}) with $\beta<\infty$ implies that $u$ in these variables is squeezed between two (time) translations of the aforementioned heteroclinic, and thus is a heteroclinic connection between $\varphi_\infty$ and zero itself. Theorem \ref{thm2} with $\beta<\infty$ then boils down to showing that such a heteroclinic connection is unique up to translations. To this end, they analyzed carefully the projections of the solution on the stable, unstable and center eigenspaces of the linearized operator around the singular steady state $\varphi_\infty$ as the (rescaled) time approaches $-\infty$. Since they focus only on this limit, they can allow the case $\beta=\infty$ in the assumptions. We point out that, even though the space domain is unbounded, the aforementioned linear operator is self-adjoint with compact resolvent (in the natural weighted $L^2$ space) thanks to the assumption  $p>p_c$ (cf. \cite{HV}). In passing, let us note that in the critical case $p=p_c$ the aforementioned property was  proven only recently in \cite[Prop. 2.2]{Seki}.   For the local nonlinear analysis near $\varphi_\infty$ they used  as a guideline the invariant manifold theory from infinite-dimensional dynamical systems in the spirit of  \cite{fk,mz} (see also the survey \cite{fm}).

 It is worth mentioning that they  expressed their belief that the conclusion of Theorem \ref{thm1} (with the obvious modifications) is likely to be valid
for all solutions that are bounded in absolute value by $\varphi_\infty$ and  also for
$p=p_c$.

We refer to the recent papers \cite{polacikQui} and \cite{quinteras2020} for some important new developments on Liouville type
theorems for (\ref{eqEq})  and an overview. We highlight that by combining the invariant manifold ideas and the intersection number argument (zero-number principle) for one-dimensional parabolic equations, it was shown in the former reference that
positive, radial and bounded eternal solutions of (\ref{eqEq}) are steady states at least if $p>p_L$ (where $p_L>p_c$ stands for Lepin's critical exponent).

Remarkably, it was shown in \cite{filaYana} that (\ref{eqEq}) admits a positive solution of homoclinic type that converges to zero as $t\to \pm \infty$ and is radial in space (decaying to zero), provided that $p\in (p_S,p_L)$. We point out that for each sufficiently large negative time this solution has precisely two (positive) radial intersections with the singular steady state $\varphi_\infty$
which are eventually lost.
On the other hand,        it was also proven in the same reference that the only solution of (\ref{eqEq}) with $p\in [p_S,p_c)$ in $\mathbb{R}^N\times (-\infty,0)$ that satisfies $|u|\leq \varphi_\infty$ is the trivial one (see also \cite{sourggrok} for a simple proof and  extensions of this result).

\subsection{Our results}\label{subRes} The above discussion motivated our main result, which is the following.
\begin{thm}\label{thmMine}
Suppose that $p\geq p_c$ and that $u$ is a (classical) solution of (\ref{eqEq}) in $\mathbb{R}^N\times \mathbb{R}$
satisfying
\begin{equation}\label{eqSandwich}
   |u(\cdot,t)|\leq \varphi_\beta\ \ \textrm{for all}\ t\in \mathbb{R}
\end{equation}
for some
           $\beta\in  (0,\infty] $,
                                                 and
\begin{equation}\label{eqlower}
  u(x,t)=
                  \frac{L}{|x|^m}+o\left(\frac{1}{|x|^{m+\lambda_1-2}}\right)  \end{equation}
or
\begin{equation}\label{eqlower222}
  u(x,t)=
                  \frac{L}{|x|^m}+o\left(\frac{\ln|x|}{|x|^{m+\lambda_1-2}}\right) \ \ \textrm{if}\ p=p_c\ \textrm{and}\ \beta<\infty,
\end{equation}
uniformly in $t\in \mathbb{R}$,  as $|x|\to \infty$  (where $m,\lambda_1>0$ are as in (\ref{eqKernelSing}), recall also (\ref{eqLambda12})).
Then, the solution $u$ does not depend on time.
\end{thm}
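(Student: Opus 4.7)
The plan is to differentiate (\ref{eqEq}) in time and work with $w := u_t$, which satisfies the linear parabolic equation
$$w_t - \Delta w - p|u|^{p-1} w = 0 \quad \text{on } \mathbb{R}^N \times \mathbb{R},$$
and to show $w \equiv 0$ by Serrin's sweeping principle with a positive time-independent supersolution $\Psi$. When $\beta<\infty$ I would take $\Psi(x) := \partial_\alpha \varphi_\alpha(x)\bigl|_{\alpha=\beta}$, which is strictly positive by (\ref{eqMonot}) and, upon differentiating (\ref{eqSS}) in $\alpha$, satisfies $\Delta\Psi + p\varphi_\beta^{p-1}\Psi = 0$. When $\beta=\infty$, the kernel element $\Psi(x) := |x|^{-(m+\lambda_1)}$ from (\ref{eqKernelSing}) plays the analogous role on $(\mathbb{R}^N \setminus \{0\}) \times \mathbb{R}$, satisfying $\Delta\Psi + p\varphi_\infty^{p-1}\Psi = 0$ with $\varphi_\infty(x) := L|x|^{-m}$. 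In either case, because $|u|\leq\varphi_\beta$ and $t\mapsto t^{p-1}$ is monotone on $[0,\infty)$, a direct computation gives, for every $\sigma\geq 0$,
$$(\sigma\Psi)_t - \Delta(\sigma\Psi) - p|u|^{p-1}(\sigma\Psi) \;=\; \sigma p\bigl(\varphi_\beta^{p-1} - |u|^{p-1}\bigr)\Psi \;\geq\; 0,$$
so $\sigma\Psi$ is a time-independent supersolution of the linear equation for $w$.

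The technical core is to establish $w(x,t) = o(\Psi(x))$ as $|x|\to\infty$, uniformly in $t\in\mathbb{R}$. Writing $u(x,t) = L|x|^{-m} + h(x,t)$ and using that $\varphi_\infty$ solves the steady state equation away from the origin, $h$ satisfies
$$h_t - \Delta h \;=\; |u|^{p-1}u - \varphi_\infty^p,$$
whose right-hand side, by $|u|\leq\varphi_\infty$ and the mean value theorem (valid where $u>0$, hence for $|x|$ large by the decay hypothesis), has the form $c(x,t)\,h$ with $|c(x,t)| = O(|x|^{-2})$ at infinity. Applying interior parabolic Schauder estimates on cylinders $B_R(x_0)\times(t_0-R^2, t_0)$ with $R \asymp |x_0|$, and rescaling $y := (x-x_0)/R$, $s := (t-t_0)/R^2$ to normalize the zero-order coefficient to size $O(1)$, converts the uniform-in-$t$ pointwise $o$-decay of $h$ provided by (\ref{eqlower}) or (\ref{eqlower222}) into the same kind of $o$-decay of $h_t = w$, with an extra gain of $R^{-2}\asymp |x|^{-2}$. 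From (\ref{eqRescal})--(\ref{eqAs}), on the other hand, $\Psi(x) \sim c_\beta |x|^{-(m+\lambda_1)}$ when $p>p_c$ or $\beta=\infty$, and $\Psi(x) \sim c_\beta \ln|x|\cdot |x|^{-(m+\lambda_1)}$ when $p=p_c$ and $\beta<\infty$, for some $c_\beta>0$. A case-by-case check then gives $w/\Psi\to 0$ uniformly in $t$ as $|x|\to\infty$; combined with interior parabolic bounds on compact subsets of $\mathbb{R}^N\setminus\{0\}$, this makes $\sigma := \sup_{(x,t)} w(x,t)/\Psi(x)$ a finite nonnegative number.

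The sweep is now almost automatic. Suppose $\sigma > 0$ for contradiction, and set $v := \sigma\Psi - w \geq 0$, which is a supersolution of the linearized equation by the computation above. Pick a sequence $(x_n,t_n)$ with $w(x_n,t_n)/\Psi(x_n)\to\sigma$; by the uniform decay of $w/\Psi$ at infinity (and, when $\beta=\infty$, near the origin too, since $\Psi\to\infty$ as $x\to 0$ while $w$ stays bounded), we may assume that $x_n$ remains in a fixed compact subset of $\mathbb{R}^N\setminus\{0\}$. If $t_n$ is bounded, a standard compactness argument gives an interior point $(x_0,t_0)$ where $v$ vanishes; otherwise, the time translates $u_n(x,t) := u(x,t+t_n)$ converge locally uniformly, along a subsequence, to a classical solution $u_\infty$ of (\ref{eqEq}) still satisfying (\ref{eqSandwich}) and the decay hypothesis (both uniform in $t$), and $w_\infty := \partial_t u_\infty$ attains $w_\infty/\Psi = \sigma$ at an interior point. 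In both cases, the strong maximum principle applied to the nonnegative supersolution $v$ (resp.\ its limiting analogue) forces $w \equiv \sigma\Psi$ on the backward parabolic cone $\mathbb{R}^N\times(-\infty, t_0]$. Integrating $u_t \equiv \sigma\Psi > 0$ backwards in time then gives
$$u(x,t) \;=\; u(x,t_0) - \sigma(t_0-t)\Psi(x) \;\longrightarrow\; -\infty \quad \text{as } t\to -\infty,$$
contradicting (\ref{eqSandwich}). Hence $\sigma = 0$, so $w\leq 0$; applying the same reasoning to $-w$ yields $w\geq 0$, and thus $w\equiv 0$.

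I expect the principal obstacle to lie in the parabolic regularity step upgrading the pointwise uniform-in-$t$ decay of $u$ into matching decay of $u_t$: one must formulate the Schauder estimates on parabolic cylinders whose radius scales with $|x|$, propagate the $o(\cdot)$-symbols (and their logarithmic variant when $p=p_c$) correctly through the rescaling, and keep all constants independent of $t$. A secondary subtlety is the limit case $\beta=\infty$, where $\Psi$ is singular at the origin and the sweep must be carried out on $(\mathbb{R}^N\setminus\{0\})\times\mathbb{R}$; there the classical $L^\infty$ bound on $u$ near $x=0$ is used to localize the supremum of $w/\Psi$ away from the singularity.
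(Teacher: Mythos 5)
Your proposal takes essentially the same route as the paper: differentiate (\ref{eqEq}) in time, observe that the positive kernel element of the linearized steady-state operator (namely $\partial_\alpha\varphi_\alpha$ at $\alpha=\beta$, which is exactly the paper's $Z$, or $Z_\infty=|x|^{-(m+\lambda_1)}$ when $\beta=\infty$) is a positive supersolution of the equation for $u_t$ because $|u|\le\varphi_\beta$, obtain the decay $u_t=o\bigl(|x|^{-(m+\lambda_1)}\bigr)$ uniformly in $t$ by interior parabolic estimates on cylinders of radius $\asymp|x|$, and then sweep via the strong maximum principle, handling unbounded time sequences by translation and local compactness. The only cosmetic difference at infinity is technical: the paper reaches the decay of $u_t$ by a parabolic analogue of the Bethuel--Brezis--H\'elein interpolation lemma, applied twice to estimate $\Delta u$ and then reading $u_t$ from the equation, whereas you invoke rescaled Schauder estimates directly on $h_t$; your route needs H\"older control of the zero-order coefficient from a bootstrap, but it works.

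There is, however, a genuine gap in your treatment of the case $\beta=\infty$. You justify that the supremum of $w/\Psi$ is localized away from $x=0$ by asserting that ``$w$ stays bounded'' near the origin, via a ``classical $L^\infty$ bound on $u$ near $x=0$.'' Neither claim is true \emph{uniformly in $t$}: the only pointwise bound available is $|u|\le\varphi_\infty\sim L|x|^{-m}$, which is singular at the origin, and $u(0,\cdot)$ (hence also $u_t(0,\cdot)$) may well be unbounded as $t\to\pm\infty$. What actually saves the argument — and what you need to supply — is a quantitative estimate near $x=0$: exactly as at infinity, rescaled interior parabolic estimates together with $|u|\le\varphi_\infty$ give $\Delta u=O(|x|^{-m-2})$ uniformly in $t$, hence from the equation
\[
u_t=\Delta u+|u|^{p-1}u=O(|x|^{-m-2})\quad\textrm{as}\ x\to 0,\ \textrm{uniformly in}\ t,
\]
using $m(p-1)=2$. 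Comparing with $Z_\infty=|x|^{-(m+\lambda_1)}$ and invoking the crucial inequality $\lambda_1>2$ (equation (\ref{eqLambda12}), which the theorem statement asks you to recall) yields $u_t/Z_\infty\to 0$ as $x\to 0$ uniformly in $t$, which is what genuinely excludes the origin from the sweep and makes $\sigma$ finite. Without this, the step ``$x_n$ remains in a fixed compact subset of $\mathbb{R}^N\setminus\{0\}$'' is unjustified.
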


We refer to Remark \ref{remGibbons} for a corresponding rigidity result for the (ill posed) hamiltonian counterpart to (\ref{eqEq}) that can be proven along the same lines.

Unexpectedly,  in the critical case $p=p_c$, where the proof of \cite{py} does not apply, we can do even better if $\beta<\infty$ (compare (\ref{eqlower}) to (\ref{eqlower222})).

Let us briefly relate our main result to Theorem \ref{thm2}.    If $u$ satisfies the assumption (\ref{eqSandwich9}) of the latter, then it clearly satisfies our assumption (\ref{eqlower}) (recall (\ref{eqAs}) and the comment immediately following Theorem \ref{thm2}).
In particular,   our result yields the following noteworthy corollary.

\begin{cor}\label{cor00}
 The validity of Theorem \ref{thm2} holds for the critical exponent case $p=p_c$ as well.
 \end{cor}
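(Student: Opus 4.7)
The plan is to deduce Corollary \ref{cor00} from Theorem \ref{thmMine} by checking that the hypotheses of Theorem \ref{thm2} at $p=p_c$ imply those of Theorem \ref{thmMine}, then invoking the classification of positive steady states to identify $u$ with some $\varphi_\gamma$.

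Let $u$ satisfy the hypotheses of Theorem \ref{thm2} at the exponent $p=p_c$. As the paper itself remarks immediately after Theorem \ref{thm2}, one first extends $u$ to an eternal classical solution on $\mathbb{R}^N\times\mathbb{R}$: solving the forward Cauchy problem with initial datum $u(\cdot,0)$ yields a solution which remains trapped in the time-independent sandwich $\varphi_\alpha\leq u\leq\varphi_\beta$, so that $\varphi_\alpha\leq u(\cdot,t)\leq\varphi_\beta$ for all $t\in\mathbb{R}$. In particular $|u|=u\leq\varphi_\beta$, so \eqref{eqSandwich} is satisfied.

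Next I verify the uniform asymptotic. Specializing \eqref{eqAs} to $p=p_c$, for every finite $\gamma>0$,
\[
\varphi_\gamma(x)=\frac{L}{|x|^m}+b(\gamma)\frac{\ln|x|}{|x|^{m+\lambda_1}}+O\left(\frac{1}{|x|^{m+\lambda_1}}\right),
\]
while $\varphi_\infty(x)=L|x|^{-m}$ exactly. The sandwich therefore yields, uniformly in $t\in\mathbb{R}$,
\[
u(x,t)-\frac{L}{|x|^m}=O\left(\frac{\ln|x|}{|x|^{m+\lambda_1}}\right)\quad\textrm{as}\ |x|\to\infty.
\]
Using $\lambda_1>2$, recall \eqref{eqLambda12}: in the case $\beta<\infty$ one has $\ln|x|/|x|^{m+\lambda_1}=o(\ln|x|/|x|^{m+\lambda_1-2})$, giving \eqref{eqlower222}; in the case $\beta=\infty$, since additionally $\ln|x|/|x|^2\to 0$, one has $\ln|x|/|x|^{m+\lambda_1}=o(1/|x|^{m+\lambda_1-2})$, giving \eqref{eqlower}. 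Thus all hypotheses of Theorem \ref{thmMine} at $p=p_c$ are fulfilled.

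Theorem \ref{thmMine} then forces $u$ to be independent of $t$, so $u$ is a positive, bounded, classical solution of $\Delta u+u^{p_c}=0$ on $\mathbb{R}^N$ still satisfying $\varphi_\alpha\leq u\leq\varphi_\beta$. The sandwich gives $|x|^m u(x)\to L$ as $|x|\to\infty$; since a direct comparison of \eqref{eqJL} with $N/(N-4)$ shows that $p_c>N/(N-4)$ in each dimension $N>10$ where $p_c$ is finite, the radial symmetry theorem of \cite{GUO} (via \eqref{eqZOU}) applies and furnishes some $\gamma>0$ and $x_0\in\mathbb{R}^N$ with $u(x)=\varphi_\gamma(x-x_0)$. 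If $x_0\neq 0$, expanding $\varphi_\gamma(\cdot-x_0)$ in inverse powers of $|x|$ produces a correction to $L|x|^{-m}$ of order $|x|^{-(m+1)}$, much larger than the $O(\ln|x|/|x|^{m+\lambda_1})$ correction allowed by the sandwich (since $\lambda_1>1$); hence $x_0=0$ and $u=\varphi_\gamma$, with $\gamma\in[\alpha,\beta]$ forced by monotonicity \eqref{eqMonot}. The main technical point I would double-check is the asymptotic bookkeeping in the second step, as for $p=p_c$ it hinges on tracking the logarithmic term in \eqref{eqAs} and on the strict inequality $\lambda_1>2$.
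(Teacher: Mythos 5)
Your proof is correct and follows essentially the same route the paper sketches around Corollary \ref{cor00}: extend $u$ to an eternal solution via the Cauchy problem, read off the asymptotic hypothesis (\ref{eqlower}) or (\ref{eqlower222}) from the two-sided bound (\ref{eqSandwich9}) together with (\ref{eqAs}) and $\lambda_1>2$, apply Theorem \ref{thmMine} to freeze time, and then classify the resulting positive steady state via radial symmetry. The bookkeeping you flag is fine: the sandwich indeed gives $u-L|x|^{-m}=O(\ln|x|/|x|^{m+\lambda_1})$ uniformly in $t$, and since $\ln|x|/|x|^{2}\to 0$ this is automatically $o(1/|x|^{m+\lambda_1-2})$, so in fact (\ref{eqlower}) holds in both cases $\beta<\infty$ and $\beta=\infty$ and the split you make is harmless but unnecessary. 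One small improvement over the paper's own remark is your observation that $p_c=1+4/(N-4-2\sqrt{N-1})>1+4/(N-4)=N/(N-4)$ for $N>10$, which shows that only the condition (\ref{eqZOU}) (and not (\ref{eqZOU22})) is needed to invoke the radial symmetry result of \cite{GUO}; the paper additionally notes that (\ref{eqZOU22}) is satisfied, but your inequality makes that remark superfluous. Your argument that the center of symmetry must be the origin (a nonzero translation of $\varphi_\gamma$ would create a correction of order $|x|^{-m-1}$, too large given $\lambda_1>1$) is also correct and is the expected way to conclude.
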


  Let us note in this regard that, once the solution of Theorem \ref{thm2} is shown to be independent of time, its radial symmetry follows from
the elliptic results that were mentioned in relation to (\ref{eqZOU}) and (\ref{eqZOU22}); observe also that $u$ must satisfy (\ref{eqZOU22}) because of the fact that $\varphi_\alpha$ and $\varphi_\beta$ do so (by the necessity of that condition). We point out that these radial symmetry results were
not needed with the approach of \cite{py}.

Let us mention an important consequence of our main result.
In light of the above discussion, our result implies the following extension of Theorem \ref{thm1} which confirms  part of the expressed belief in \cite{py} that was mentioned in the previous subsection. \begin{cor}\label{corGidas}
                                                                                                              The conclusion of Theorem \ref{thm1} holds even if $p=p_c$.\end{cor}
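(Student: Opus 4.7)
My plan is to carry out a direct application of LaSalle's invariance principle together with Corollary \ref{cor00} (the $p=p_c$ extension of the Liouville-type Theorem \ref{thm2}). The route I would follow is essentially the one sketched in \cite{py} for deriving Theorem \ref{thm1} from Theorem \ref{thm2}; the only adjustment needed is to invoke Corollary \ref{cor00} in the final step, which frees us of the restriction $p>p_c$ present in \cite{py}.

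I would start with $u_0\in C(\mathbb{R}^N)$ satisfying $\varphi_\alpha\le u_0\le\varphi_\beta$ and observe that, since $\varphi_\alpha,\varphi_\beta$ are stationary sub- and super-solutions, the parabolic comparison principle gives global existence of $u(\cdot,t;u_0)$ and the persistent sandwich $\varphi_\alpha\le u(\cdot,t;u_0)\le\varphi_\beta$ for every $t\ge 0$. Standard parabolic regularity then makes the trajectory relatively compact in $C_{\mathrm{loc}}(\mathbb{R}^N)$, so that $\omega(u_0)$ is nonempty and every $\phi\in\omega(u_0)$ still satisfies $\varphi_\alpha\le\phi\le\varphi_\beta$.

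Next I would invoke the invariance principle of \cite{H}: for each such $\phi$, there is an eternal classical solution $\tilde u$ of (\ref{eqEq}) on $\mathbb{R}^N\times\mathbb{R}$ with $\tilde u(\cdot,0)=\phi$ and $\tilde u(\cdot,t)\in\omega(u_0)$ for all $t\in\mathbb{R}$. This immediately yields $\varphi_\alpha\le\tilde u(\cdot,t)\le\varphi_\beta$ for every $t$, which is precisely the hypothesis (\ref{eqSandwich9}) of Theorem \ref{thm2}. Corollary \ref{cor00}, itself a direct consequence of Theorem \ref{thmMine}, then applies even in the critical case $p=p_c$ and forces $\tilde u\equiv\varphi_\gamma$ for some $\gamma>0$; the strict monotonicity (\ref{eqMonot}) pins $\gamma$ down to $[\alpha,\beta]$, so $\phi=\varphi_\gamma$ with $\gamma\in[\alpha,\beta]$. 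As $\phi\in\omega(u_0)$ was arbitrary, this gives $\omega(u_0)\subseteq\{\varphi_\gamma:\alpha\le\gamma\le\beta\}$, which is exactly the conclusion of Theorem \ref{thm1} now valid for $p=p_c$.

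There is no serious obstacle here: all the substantive analysis has already been absorbed into Theorem \ref{thmMine} and its consequence Corollary \ref{cor00}; the present reduction relies only on the comparison principle, compactness of bounded trajectories, and the invariance principle, none of which is sensitive to whether the exponent exceeds or equals the Joseph--Lundgren threshold. The key conceptual point is simply the recognition that Corollary \ref{cor00} suffices in place of Theorem \ref{thm2}—exactly the ingredient that was missing in \cite{py} and that the present paper supplies.
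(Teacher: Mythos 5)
Your proposal is correct and follows essentially the same route the paper intends: the reduction of Theorem \ref{thm1} to Theorem \ref{thm2} sketched just before Subsection \ref{subRes} (comparison principle for global existence and the persistent sandwich, parabolic regularity for compactness, and the invariance principle of \cite{H} for extracting an eternal solution through any $\phi\in\omega(u_0)$ that still obeys (\ref{eqSandwich9})), with Corollary \ref{cor00} now supplying the Liouville step at $p=p_c$. The one small caveat is terminological: what is used is the invariance of the $\omega$-limit set under the semiflow as in \cite{H}, not LaSalle's principle in its Lyapunov-function form (which would not apply here since the energy may be infinite), but this does not affect the substance of your argument.
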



The radial symmetry properties of    solutions to (\ref{eqSS})  is actually a hot topic   of research these days, see for example \cite[Sec. 11]{weiChan}.
In this regard, we can establish the following result. This theorem gains in interest if we realize that we do not assume that the solution is positive (keep in mind  the comment after (\ref{eqZOU22})).
\begin{thm}\label{thmMine2}
Suppose that $u\in C^2(\mathbb{R}^N)$ satisfies \[\Delta u+|u|^{p-1}u=0, \ x\in \mathbb{R}^N, \ \textrm{with}\ p\geq p_c,\] the bound (\ref{eqSandwich})  with $\beta=\infty$, and the asymptotic behaviour
(\ref{eqAs}) for some $a\leq 0$, $b<0$. Then, $u$ is radially symmetric. Thus, thanks to (\ref{eqRescal}), $u\equiv \varphi_\gamma$ for some $\gamma\in (0,\infty)$.
\end{thm}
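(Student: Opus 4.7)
My plan is to run Serrin's sweeping principle with the one-parameter family of radial regular steady states $\{\varphi_\alpha\}_{\alpha>0}$ centered at the origin and then deduce $u\equiv \varphi_{\gamma^\ast}$ for some $\gamma^\ast\in(0,\infty)$ via the strong maximum principle. This would directly yield radial symmetry and the identification with $\varphi_\gamma$ in one shot, bypassing any preliminary positivity argument — this is the main novelty relative to the elliptic symmetry results of \cite{GUO} and \cite{ZOU}, which rely on moving planes and therefore do assume positivity.

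As a preparation I would first show $u<\varphi_\infty$ strictly on $\mathbb{R}^N\setminus\{0\}$. The assumption $|u|\leq \varphi_\infty$ gives $\Psi := \varphi_\infty - u \geq 0$, and a direct computation yields
\[
\Delta \Psi = |u|^{p-1}u - \varphi_\infty^p \leq 0 \qquad \textrm{on}\ \mathbb{R}^N\setminus\{0\},
\]
in both cases $0\leq u\leq\varphi_\infty$ (where $u^p\leq \varphi_\infty^p$) and $u<0$ (where $|u|^{p-1}u<0\leq\varphi_\infty^p$). Since $\Psi\to+\infty$ at the origin and $\Psi\to 0$ at infinity by (\ref{eqAs}), the strong minimum principle applied to the superharmonic $\Psi$ gives $\Psi>0$ strictly. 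For $\alpha$ sufficiently large I then claim $u<\varphi_\alpha$ globally. On any compact subset of $\mathbb{R}^N\setminus\{0\}$ this follows from the monotone uniform convergence $\varphi_\alpha\nearrow\varphi_\infty$ (via (\ref{eqMonot}) and (\ref{eqRescal})) combined with the strict bound $u<\varphi_\infty$. Near the origin, $\varphi_\alpha(0)=\alpha$ dominates $\sup|u|$. At infinity, threading (\ref{eqAs}) and the matching expansion for $\varphi_\alpha$ through (\ref{eqRescal}), the coefficient $a_\alpha$ of $|x|^{-(m+\lambda_1)}$ in $\varphi_\alpha$ satisfies $a_\alpha\nearrow 0^-$ as $\alpha\to\infty$ (and analogously for $b_\alpha$ when $p=p_c$), so that $\varphi_\alpha-u\sim (a_\alpha-a)|x|^{-(m+\lambda_1)} > 0$ at infinity once $a_\alpha>a$. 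Setting
\[
\gamma^\ast := \inf\{\alpha>0 \ :\ u\leq\varphi_\alpha\ \textrm{in}\ \mathbb{R}^N\}\in (0,\infty),
\]
which is positive because $\sup u>0$ while $\varphi_\alpha\to 0$ as $\alpha\to 0$, closedness gives $u\leq \varphi_{\gamma^\ast}$ globally.

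The point I anticipate to be the main obstacle is showing that this inequality is attained at some finite $x^\ast\in\mathbb{R}^N$. By the minimality of $\gamma^\ast$ the first-order asymptotic coefficients must agree, $a_{\gamma^\ast}=a$ (resp.\ $b_{\gamma^\ast}=b$ at $p=p_c$), so $w := \varphi_{\gamma^\ast}-u\geq 0$ decays faster than $|x|^{-(m+\lambda_1)}$ (resp.\ $\log|x|\cdot |x|^{-(m+\lambda_1)}$) at infinity, leaving no naive comparison gap. Nonetheless $w$ satisfies the linear equation
\[
\Delta w + c(x)\, w = 0, \qquad c(x) := p\int_0^1 \bigl|tu+(1-t)\varphi_{\gamma^\ast}\bigr|^{p-1}\,dt,
\]
with $c$ locally bounded and globally controlled by $c(x)\leq pL^{p-1}|x|^{-2}\leq \tfrac{(N-2)^2}{4}|x|^{-2}$ — precisely the sharp Hardy threshold encoded in the condition $p\geq p_c$. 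A Phragm\'en--Lindel\"of type argument based on sub/super-solution barriers built from the explicit decay rates in (\ref{eqKernelSing}) should then rule out any nontrivial nonnegative $w$ with the stated faster-than-kernel decay, forcing $w\equiv 0$ and giving the desired interior contact. The borderline case $a=0$ (admissible for $p>p_c$) would make the sweeping set empty; one handles it by applying the same Hardy/kernel analysis directly to $\Psi$, whose decay $\Psi=o(|x|^{-(m+\lambda_1)})$ at infinity then forces $\Psi\equiv 0$, contradicting the boundedness of $u$ and so ruling out $a=0$ altogether. Once $w\equiv 0$ is established, we conclude $u\equiv \varphi_{\gamma^\ast}$, giving radial symmetry and the asserted $u\equiv \varphi_\gamma$ with $\gamma=\gamma^\ast$.
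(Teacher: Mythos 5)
You take a genuinely different route from the paper. The paper applies $T_{ij}=x_i\partial_{x_j}-x_j\partial_{x_i}$ to the equation, derives (via Lemma~\ref{lemBBH} and the expansion~(\ref{eqAs})) that $T_{ij}u=o\bigl(|x|^{-m-\lambda_1}\bigr)$, and then sweeps in the linearized equation $\Delta(T_{ij}u)+p|u|^{p-1}T_{ij}u=0$ with the positive supersolution $Z_\infty=|x|^{-m-\lambda_1}$ (for $p>p_c$) or with $\varphi_\infty-|u|$ (for $p=p_c$). You instead identify a critical parameter $\gamma^\ast$ from the family $\{\varphi_\alpha\}$ and sweep $w=\varphi_{\gamma^\ast}-u$. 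For $p>p_c$ this is a workable and arguably more direct strategy: in fact you need not define $\gamma^\ast$ as an infimum or establish $u\leq\varphi_{\gamma^\ast}$ at all — since $\alpha\mapsto a_\alpha=a_1\alpha^{-\lambda_1(p-1)/2}$ (with $a_1<0$) is a strictly increasing bijection of $(0,\infty)$ onto $(-\infty,0)$, for $a<0$ you can simply choose the unique $\gamma^\ast$ with $a_{\gamma^\ast}=a$, observe that $w$ and $-w$ both satisfy a linear equation with potential $c\leq p\varphi_\infty^{p-1}$ and are $o(Z_\infty)$ as $|x|\to\infty$ and $o(Z_\infty)$ as $x\to 0$, and then the very same sweeping as in the paper's proof of Theorem~\ref{thmMine} gives $w\leq 0$ and $-w\leq 0$, hence $w\equiv 0$. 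Your separate treatment of $a=0$ (applying the same argument to $\Psi=\varphi_\infty-u$ to rule that case out) is also sound. Your ``minimality forces $a_{\gamma^\ast}=a$'' step, while ultimately correct, is the weakest link as written: it needs a uniform estimate on $\partial_\alpha\varphi_\alpha\sim C_\alpha|x|^{-m-\lambda_1}$ plus a compactness argument, which you do not supply.

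There is, however, a genuine gap at the critical exponent $p=p_c$, which the theorem is required to cover. After matching $b_{\gamma^\ast}=b$, the expansion~(\ref{eqAs}) gives you only $w=\varphi_{\gamma^\ast}-u=O\bigl(|x|^{-m-\lambda_1}\bigr)$, not $o\bigl(|x|^{-m-\lambda_1}\bigr)$: the $O$-remainders in~(\ref{eqAs}) for $u$ and for $\varphi_{\gamma^\ast}$ carry no reason to cancel. Thus the sweeping against $Z_\infty$ does not close, because $w/Z_\infty$ need not tend to zero at infinity, and the other element of the kernel~(\ref{eqKernelSing}) is sign-changing and useless as a comparison function. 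The paper resolves precisely this difficulty by switching the supersolution to $\varphi_\infty-|u|$, which decays like $-b\ln|x|\,|x|^{-m-\lambda_1}$ (slower than $Z_\infty$) and is a supersolution of the linearization $\Delta+p|u|^{p-1}$ — the operator that $T_{ij}u$ solves exactly. Your $w$ solves instead $\Delta w+c_w w=0$ with the averaged potential $c_w=p\int_0^1|tu+(1-t)\varphi_{\gamma^\ast}|^{p-1}\,dt$, and neither $\varphi_\infty-|u|$ nor $\varphi_\infty-\varphi_{\gamma^\ast}$ can be seen to be a supersolution of this operator (the concavity argument that works for $\Delta+p|u|^{p-1}$ breaks down because $c_w$ can exceed both $p|u|^{p-1}$ and $p\varphi_{\gamma^\ast}^{p-1}$). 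Your ``Hardy threshold'' observation is correct but does not by itself produce a Phragm\'en--Lindel\"of conclusion for a function only $O(Z_\infty)$. As written, your proposal therefore does not prove the theorem when $p=p_c$.
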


\subsection{Method of proof}\label{subsecMethod}
Our method of proof of Theorem \ref{thmMine} is based on the maximum principle and a sweeping type argument,
exploiting the presence of
the simply ordered curve of equilibria $\varphi_\alpha$. This may come as a surprise at first, as this approach was ruled out in \cite{py} (see the discussion in their introduction in relation to the theory of monotone dynamical systems).
However, we stress that we apply this argument to the linearized equation for $u_t$, instead of applying it in (\ref{eqEq}). The main observation is that $u_t$ goes to zero faster, as $|x|\to \infty$, than the \emph{positive} element of the kernel of the
linearization of the steady state problem that is generated by differentiation with respect to  $\alpha$; if '$\alpha=\infty$', keeping in mind (\ref{eqKernelSing}), the corresponding element in the kernel is plainly $r^{-m-\lambda_1}$ (even if $p=p_c$, since the other element in that case is sign changing). We emphasize that this property allows us to 'sweep' with this element which, due to (\ref{eqSandwich}), is a \emph{positive supersolution} to the linearized equation of (\ref{eqEq}) on $u$.
The unboundedness issue in the $t$ direction can be tackled by exploiting that the equation (\ref{eqEq}) and the bound (\ref{eqSandwich}) are invariant under
time translations.

A related idea  can be found in our paper \cite{ss} on an elliptic system arising in the study of phase separation in Bose-Einstein condensates, where a sweeping argument was applied to the tangential  derivatives to the (blown-up) interface of the solutions.

We prove Theorem \ref{thmMine2} analogously, by applying a sweeping argument to show that  the angular derivatives
of $u$ are identically equal to zero.

In contrast, we refer to \cite{BH} where a sweeping argument (of sliding type) was applied directly to a class of parabolic
Allen-Cahn type  equations to prove the Liouville property for eternal solutions that stay between two ordered standing wave solutions
(compare with (\ref{eqSandwich9}) herein). Actually, a sweeping argument in the linearized equation may also be applied successfully in that setting as well (after the solution has been shown to be monotone in one spatial direction by, say, the method of moving planes). In relation to Theorem  \ref{thmMine2},  it is worth mentioning that a sweeping argument of sliding type has been applied in \cite[Lem. 3.2]{busca} directly to the nonlinear equation $\Delta u+f(u)=0$ in $\mathbb{R}^N$ to show that it cannot have ordered positive solutions such that $u\to 0$ as $|x|\to \infty$, provided that $f\in C^1$ satisfies $f(0)=0$ and $f'(0)<0$ (we also refer to \cite{sourdisCRM} for an extension of this result to sign-changing solutions using the corresponding parabolic flow).

Lastly, in order to highlight the flexibility of our approach, we refer to
\cite{sourdisProceedings}, where a one-dimensional symmetry result for eternal solutions to the Fisher-KPP equation
was  proven using the proof of Theorem \ref{thmMine} as a guideline.

\subsection{Open problems and a connection to ancient solutions of the mean curvature flow}\label{subsecOpen}Our Theorem \ref{thmMine} confirms to a large extend the expectation, first expressed in \cite{py},
that the Liouville property holds for any eternal solution of (\ref{eqEq}) with $p\in [p_c,\infty)$ that is bounded in absolute value
by $\varphi_\infty$. The only thing that remains is to show that such solutions have to satisfy the  asymptotic behaviour (\ref{eqlower}) (up to a sign change, if they are nontrivial). For this purpose,
we believe that the methods that have been developed for the study of the asymptotic behaviour as $|x|\to \infty$ of  the steady states  could be useful (see \cite{verong} and also Remark \ref{remWang} herein).

It is known that Gelfand's equation
\[
u_t=\Delta u+e^u,\ \ x\in \mathbb{R}^N,\ t\in \mathbb{R},
\]
admits a one-parameter family of  ordered, radial steady states when $N\geq 10$ (see \cite{expog}).
We expect that analogous Liouville type theorems are valid for this famous equation as well.

The authors of \cite{cabreLNM} and \cite{velaz} have made a remarkable analogy between the radial solutions of (\ref{eqSS}) and the minimal surfaces that foliate
the Simons cone in space dimension larger or equal than 8. The analog of the Simons cone is the singular solution $\varphi_\infty$. The same heuristic analogy can also be made with Lawson's cones in \cite{Davini} and the references therein.
On the other hand, the aforementioned invariant manifold ideas for obtaining  Liouville type theorems for (\ref{eqEq}) also play an important role in the study
of ancient solutions to the mean curvature flow (see  \cite{ads} and the references therein). In light of the above, we believe that it would be of interest to explore whether analogous Liouville type results
to those in Theorems \ref{thm2}  and \ref{thmMine} hold for ancient solutions to the mean curvature flow.

This intuitive connection to mean curvature flow can be further supported    by comparing
the matched asymptotic analysis in \cite{Amat}, concerning symmetric self-shrinking solutions for the mean curvature flow in low dimensions, to
that in \cite{Cmat} concerning radial backwards self-similar solutions of  (\ref{eqEq}) with $p\in (p_S,p_c)$.

\subsection{Outline of the paper}\label{subsecOutline}The rest of the paper is   primarily devoted to the proofs of our main results in the next section.
More specifically, in Subsection \ref{subsec1} we will derive bounds for the asymptotic behaviour as $|x|\to \infty$ of  some partial derivatives of the solution in Theorem \ref{thmMine}. Actually, the lemma that we will prove to accomplish this task will be frequently  used for related purposes in the paper.
In Subsection \ref{subsec2}, relying on this information, we will prove Theorem \ref{thmMine}. As we have already explained,  the validity of Corollaries \ref{cor00} and \ref{corGidas}  follows at once from Theorem \ref{thmMine}. Therefore, their proofs will be omitted. In the same subsection we will also remark on the  elliptic rigidity  result that we hinted at following the statement of Theorem \ref{thmMine}.  Lastly, in Subsection \ref{subsec3} we will prove Theorem \ref{thmMine2} and provide some related remarks. In particular, a further heuristic connection to minimal surface theory will come up in Remark \ref{remWang}.

\section{Proofs of the main results}
In this section we will prove our results and provide some related remarks.
\subsection{Some preliminary estimates}\label{subsec1}Our objective in this subsection is to estimate $u_t$ for $|x|\gg 1$ uniformly in $t\in \mathbb{R}$.

In the sequel we will make repeated use of the  following interpolation lemma, which represents a direct extension to the parabolic setting of Lemma A.1 in \cite{bbh} concerning elliptic equations (see also (4.45) in \cite{GT}).
It is of independent interest and may be useful in other problems.
\begin{lem}\label{lemBBH}
Assume that $\psi$ satisfies (classically)
\[
\psi_t-\Delta \psi=f\ \textrm{in}\ \mathcal{C}_R=\left\{|x|<R,\ |t|<R^2\right\}\subset \mathbb{R}^N\times \mathbb{R}.
\]
Then
\[
|\nabla_x \psi(0,0)|^2\leq C\|f\|_{L^\infty(\mathcal{C}_R)}\|\psi\|_{L^\infty(\mathcal{C}_R)}+\frac{C}{R^2}\|\psi\|_{L^\infty(\mathcal{C}_R)}^2
\]
for some constant $C>0$ that depends only on the dimension $N$.
\end{lem}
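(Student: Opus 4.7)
My plan is to combine a parabolic version of the interior gradient estimate from \cite{bbh} with a scaling-and-optimization argument. A global parabolic rescaling first reduces the statement to $R=1$: setting $\tilde\psi(y,s)=\psi(Ry,R^2 s)$ on $\mathcal{C}_1$ gives $\tilde\psi_s-\Delta_y\tilde\psi=R^2 f(Ry,R^2 s)$, with $\|\tilde\psi\|_{L^\infty(\mathcal{C}_1)}=\|\psi\|_{L^\infty(\mathcal{C}_R)}$ and $\nabla_y\tilde\psi(0,0)=R\,\nabla_x\psi(0,0)$, and a check of exponents shows that it suffices to prove the lemma on $\mathcal{C}_1$.

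On the unit cylinder I would next establish the \emph{basic gradient bound at scale $r$}: for every $r\in(0,1]$,
\[
|\nabla_x\psi(0,0)|\le C\Big(\tfrac{1}{r}\|\psi\|_{L^\infty(\mathcal{C}_r)}+r\|f\|_{L^\infty(\mathcal{C}_r)}\Big).
\]
This would follow by applying the $r=1$ case to the rescaled function $(y,s)\mapsto\psi(ry,r^2 s)$. The $r=1$ case itself is proved by multiplying $\psi$ by a smooth cutoff $\eta$ equal to $1$ on $\mathcal{C}_{1/2}$ and supported in $\mathcal{C}_1$, so that $\phi=\eta\psi$ satisfies $\phi_t-\Delta\phi=\eta f+(\eta_t-\Delta\eta)\psi-2\nabla\eta\cdot\nabla\psi$; one then writes $\phi$ via Duhamel's formula against the Gaussian heat kernel $G$, takes one $x$-derivative at $(0,0)$, and integrates by parts in $y$ to transfer the stray $\nabla\psi$ onto the twice-differentiated kernel. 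The stronger singularity of $\nabla^2 G$ as $s\to 0^-$ is absorbed by the fact that $\nabla\eta$ is supported away from the origin, producing Gaussian decay that overcomes the polynomial blow-up. Writing $M=\|\psi\|_{L^\infty(\mathcal{C}_1)}$, $F=\|f\|_{L^\infty(\mathcal{C}_1)}$ and minimizing over $r\in(0,1]$ — picking $r=1$ if $F\le M$ and $r=\sqrt{M/F}$ otherwise — yields $|\nabla_x\psi(0,0)|^2\le C(MF+M^2)$, which combined with the first step is the claim.

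The principal obstacle I foresee is the Duhamel-plus-integration-by-parts bookkeeping at unit scale, namely verifying the absolute convergence of the space--time integrals near $s=0^-$ after one derivative has been moved from $\psi$ onto the heat kernel; once that is settled, the scaling reductions and the one-variable optimization are entirely elementary.
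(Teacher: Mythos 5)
Your proposal is correct, and its overall scheme --- parabolic rescaling, an interior gradient estimate at unit scale, then one-variable optimization over the scale parameter --- is the same as the paper's, which itself transcribes Lemma A.1 of \cite{bbh} to the parabolic setting. The one genuine divergence is in how the unit-scale bound $|\nabla_x\psi(0,0)|\le C\bigl(\|\psi\|_{L^\infty(\mathcal{C}_1)}+\|f\|_{L^\infty(\mathcal{C}_1)}\bigr)$ is obtained: the paper cites interior $W^{2,1}_p$ estimates together with the parabolic Sobolev embedding into $C^{1+\theta,\frac{1+\theta}{2}}$, whereas you construct it from scratch via Duhamel's representation against the Gaussian heat kernel with a space--time cutoff $\eta$, integrating by parts once to trade $\nabla\psi$ for $\nabla^2 G$, and then using the Gaussian decay of $\nabla^2 G$ on the support of $\nabla\eta$ (where either $|y|$ is bounded below or $|s|$ is bounded away from $0$) to control the otherwise non-integrable $(-s)^{-1}$ singularity as $s\to 0^-$. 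Your route is more elementary and self-contained but tied to the flat heat operator, which is all that is needed here; the paper's cited regularity package is quicker to invoke and also covers variable-coefficient parabolic operators. Your two-stage rescaling (first to $R=1$, then by $r\le1$) is a cosmetic reorganization of the paper's single rescaling by $\lambda\le R$ followed by division by $\lambda$ and minimization over $\lambda\in(0,R]$, and the optimization dichotomy you write down (take $r=1$ when $F\le M$, else $r=\sqrt{M/F}$) is the correct constrained minimizer and yields the stated bound.
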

\begin{proof}
  The proof follows closely that of the aforementioned reference \cite{bbh}. Essentially the only difference is that one has to employ the corresponding interior parabolic regularity estimates
  instead of the elliptic ones.

   Let us present some details for the reader's convenience.
Let $\lambda\leq R$ be a positive constant to be determined. The rescaled
function
\[w(y,\tau) = \psi(\lambda y,\lambda^2\tau)\]
is well defined in the space-time cylinder $\mathcal{C}_1=\{|y|<1,\ |\tau|<1\}$ (since $\lambda \leq R$) and  satisfies
\[w_\tau-\Delta_y w=\lambda^2f(\lambda y,\lambda^2 \tau)\ \textrm{in}\ \mathcal{C}_1.\]
From standard interior parabolic estimates  there exists a constant $C>0$, depending only on $N$ and $1<p<\infty$, such that
\[
\|w\|_{W^{2,1}_p(\mathcal{C}_{1/2})}\leq C\left( \lambda^2\|f(\lambda y,\lambda^2 \tau)\|_{L^p(\mathcal{C}_1)}
+\|w\|_{L^p(\mathcal{C}_1)} \right)
\]
 (see for instance \cite[Thm. 7.22]{L} or \cite[Thm. 48.1]{qs}). On the other hand, by the parabolic Sobolev embedding, we have\[ \|w\|_{C^{1+\theta,\frac{1+\theta}{2}}(\mathcal{C}_{1/2})}\leq C \|w\|_{W^{2,1}_p(\mathcal{C}_{1/2})} \ \ \textrm{with}\ \theta=1-\frac{N+2}{p}\ \textrm{if}\ N+2<p<\infty, \] for a possibly larger constant $C>0$ still depending only on $N$ and $p$ (see \cite[pgs. 80, 342]{lad} and also \cite[App. E]{flemg}). By the above two relations, we deduce that
\[|\nabla_y w(0,0)| \leq C\left(\lambda^2\|f(\lambda y ,\lambda^2 \tau)\|_{L^\infty(\mathcal{C}_1)} + \|w\|_{L^\infty(\mathcal{C}_1)}\right),\]
where the above constant  $C>0$ depends only on the space dimension $N$. In  the original variables, the above estimate takes the form
\[
\lambda |\nabla_x  \psi(0,0)| \leq C\left(\lambda^2\|f\|_{L^\infty(\mathcal{C}_R)} + \|\psi\|_{L^\infty(\mathcal{C}_R)}\right).
\]
The rest of the proof runs as in Lemma A.1 in \cite{bbh}, where one divides both sides of the above inequality by $\lambda$ and then minimizes the resulting righthand side  over  $\lambda \in (0,R]$.
\end{proof}

The above lemma will help us to make rigorous the following natural expectation (keep in mind (\ref{eqSingular})).
\begin{lem}\label{lemExp}
If $u$ is as in Theorem \ref{thmMine}, then
\[
\Delta u(x,t)=
                  -\frac{L^p}{|x|^{pm}}+o\left(\frac{1}{|x|^{m+\lambda_1}}\right)\   \   \textrm{in the case of}\ (\ref{eqlower}),
\]
or\[
\Delta u(x,t)=
                  -\frac{L^p}{|x|^{pm}}+o\left(\frac{\ln|x|}{|x|^{m+\lambda_1}}\right) \ \ \textrm{if}\ p=p_c\ \textrm{and}\ \beta<\infty,\   \   \textrm{in the case of}\ (\ref{eqlower222}),
\]
uniformly in $t\in \mathbb{R}$,  as $|x|\to \infty$.
\end{lem}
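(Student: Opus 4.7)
The plan is to set $\eta := u - \varphi_\infty$, with $\varphi_\infty(x) = L/|x|^m$, and to work outside a large ball. Since the hypothesis (\ref{eqlower}) (or (\ref{eqlower222})) forces $\eta = o(\varphi_\infty)$, one has $u > 0$ in this region, so $\eta$ is a classical solution of the \emph{linear} parabolic equation $\eta_t - \Delta\eta = f$, where $f := u^p - \varphi_\infty^p$. A Taylor expansion around $\varphi_\infty$, valid because $|\eta| \ll \varphi_\infty$, gives $f = p\varphi_\infty^{p-1}\eta + O(\varphi_\infty^{p-2}\eta^2)$. Substituting $\varphi_\infty^{p-1} = L^{p-1}|x|^{-2}$ and $\varphi_\infty^{p-2} = L^{p-2}|x|^{m-2}$ and invoking $\lambda_1 > 2$, I would check that $f(x,t) = o(|x|^{-(m+\lambda_1)})$ uniformly in $t$ as $|x| \to \infty$; the quadratic remainder only contributes $o(|x|^{-(m+2\lambda_1-2)})$, which is of strictly smaller order.

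The main idea is then to apply Lemma \ref{lemBBH} twice on parabolic cylinders $\mathcal{C}_R(x_0, t_0) := \{|x - x_0| < R,\ |t - t_0| < R^2\}$ with $R = |x_0|/2$ and $|x_0|$ large, gaining two orders of spatial decay in the process. On any such cylinder $|x| \sim |x_0|$, so the pointwise asymptotic bounds translate into uniform bounds over $\mathcal{C}_R$ with $|x_0|$ in place of $|x|$. First, taking $\psi = \eta$ in the lemma would yield
\[
|\nabla_x\eta(x_0, t_0)|^2 \leq C\|f\|_\infty \|\eta\|_\infty + C R^{-2}\|\eta\|_\infty^2 = o\bigl(|x_0|^{-2(m+\lambda_1-1)}\bigr),
\]
so that $|\nabla_x\eta(x_0, t_0)| = o(|x_0|^{-(m+\lambda_1-1)})$.

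For the second application, each $\partial_{x_i}\eta$ satisfies $(\partial_{x_i}\eta)_t - \Delta(\partial_{x_i}\eta) = \partial_{x_i}f$. I would bound the right-hand side by writing $\partial_{x_i}f = (u^{p-1}-\varphi_\infty^{p-1})\partial_{x_i}u + \varphi_\infty^{p-1}\partial_{x_i}\eta$ and inserting the estimates $u^{p-1}-\varphi_\infty^{p-1} = O(\varphi_\infty^{p-2}\eta) = o(|x|^{-\lambda_1})$, $\partial_{x_i}u = O(|x|^{-(m+1)})$, together with the first-step bound on $\partial_{x_i}\eta$, to obtain $\partial_{x_i}f = o(|x|^{-(m+\lambda_1+1)})$. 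A second application of Lemma \ref{lemBBH}, now with $\psi = \partial_{x_i}\eta$, then gives $|\nabla_x^2\eta(x_0, t_0)| = o(|x_0|^{-(m+\lambda_1)})$, and in particular $|\Delta\eta(x_0, t_0)| = o(|x_0|^{-(m+\lambda_1)})$. Since $\Delta\varphi_\infty = -\varphi_\infty^p = -L^p/|x|^{pm}$, this would conclude with $\Delta u = -L^p/|x|^{pm} + o(|x|^{-(m+\lambda_1)})$ uniformly in $t \in \mathbb{R}$.

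The critical case $p = p_c$ under (\ref{eqlower222}) would be handled identically, with every $o(r^{-(m+\lambda_1-2)})$ replaced by $o(\ln r\cdot r^{-(m+\lambda_1-2)})$; the logarithmic factor simply multiplies through each estimate without affecting the polynomial scaling. The main obstacle, though routine, is the careful bookkeeping needed to check that the quadratic Taylor remainder $O(\varphi_\infty^{p-2}\eta^2)$ and all cross terms in $\partial_{x_i}f$ are of strictly smaller order than the leading contributions; this is precisely where the bound $\lambda_1 > 2$ from (\ref{eqLambda12}) is essential.
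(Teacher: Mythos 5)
Your proof is correct and follows essentially the same strategy as the paper: write $u = \varphi_\infty + \eta$, show that the right-hand side of the linear parabolic equation for $\eta$ decays like $o(|x|^{-(m+\lambda_1)})$ (uniformly in $t$), and then apply Lemma \ref{lemBBH} twice on cylinders of radius $R = |x_0|/2$ — first to $\eta$, then to $\partial_{x_i}\eta$ — to gain two orders of decay on the second derivatives. The only cosmetic differences from the paper's argument are the use of $o(\cdot)$ notation in place of the explicit $\epsilon$–$K$ bookkeeping and a slightly different grouping of the terms in $\partial_{x_i}f$; the substance is identical.
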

\begin{proof}
We will only consider the case where $u$ satisfies (\ref{eqlower}), as the other case   can be handled in a similar fashion.

  Let $\epsilon>0$ be arbitrary.
According to (\ref{eqlower}), recalling the definition of the singular solution $\varphi_\infty$ from (\ref{eqSingular}), we can write
\begin{equation}\label{eqPsi1} u(x,t)=                  \varphi_\infty(x)+\psi(x,t),\end{equation}
where $\psi$ satisfies
\begin{equation}\label{eqPsi2}
   |\psi(x,t)|\leq \frac{\epsilon}{|x|^{m+\lambda_1-2}} \ \ \textrm{if}\ \ |x|\geq K,\ t\in \mathbb{R},
\end{equation}for some $K=K(\epsilon)>0$.
We then have
\begin{equation}\label{eq11}
  \psi_t-\Delta \psi=|u|^{p-1}u-\varphi_\infty^p\stackrel{(\ref{eqSandwich})}{=}O(1)p\varphi_\infty^{p-1}\psi=O(|x|^{-2})\frac{\epsilon}{|x|^{m+\lambda_1-2}}=\frac{O(1)\epsilon}{|x|^{m+\lambda_1}},
\end{equation}
for $|x|\geq K$, $t\in \mathbb{R}$. We point out that throughout the proof Landau's symbol $O(1)$ will be independent of small $\epsilon$.
 We can now apply Lemma \ref{lemBBH} with center $(x,t)$ (instead of $(0,0)$) and $R=|x|/2$ to obtain
\[
|\nabla_x\psi(x,t)|\leq \frac{C\epsilon}{|x|^{m+\lambda_1-1}},\ \ |x|\geq 2K,\ t\in \mathbb{R},
\]
for some constant $C>0$ that is independent of small $\epsilon>0$.
Actually, abusing notation slightly, in the sequel  we will denote by $C$ a generic positive constant whose value may increase as the proof progresses.

Let \[z_i=\partial_{x_i}\psi \ \ \textrm{for} \ \ i=1,\cdots,N.\]
Dropping the subscript $i$ for notational simplicity,  it follows from the first equality in (\ref{eq11}) that
\[\begin{array}{rcl}
    z_t-\Delta z & = & p|u|^{p-1}(\partial_{x_i}\varphi_\infty+z)-p\varphi_\infty^{p-1}\partial_{x_i}\varphi_\infty \\
      &   &   \\
      & = & p\left(|u|^{p-1}-\varphi_\infty^{p-1}\right)\partial_{x_i}\varphi_\infty+p|u|^{p-1}z\\
     &  & \\
      & =  &\frac{(p-1)O(1)}{|x|^{m(p-2)}}\frac{\epsilon}{|x|^{m+\lambda_1-2}}\frac{1}{|x|^{m+1}}  +\frac{O(1)}{|x|^{m(p-1)}}\frac{\epsilon}{|x|^{m+\lambda_1-1}}\\
      &&\\
      &\stackrel{(\ref{eqmm})}{=}&  \frac{O(1)\epsilon}{|x|^{m +\lambda_1+ 1}},
  \end{array}
\]
for $|x|\geq 2K$, $t\in \mathbb{R}$.
Arguing as before, we find
\[
    |\nabla_x z(x,t)|   \leq   \frac{C\epsilon}{|x|^{ m+ \lambda_1}}
 \ \ \textrm{for}\  |x|\geq 4K,\  t\in \mathbb{R},
  \]
which implies the assertion of the lemma.
\end{proof}

The main information that we will need from this subsection for the proof of Theorem \ref{thmMine} is encapsulated in the following corollary.
\begin{cor}\label{cor1}
  If $u$ is as in Theorem \ref{thmMine}, then
  \[
 u_t(x,t)=
                  o\left(\frac{1}{|x|^{m+\lambda_1}}\right)\   \   \textrm{in the case of}\  (\ref{eqlower}), \]
or
\[
 u_t(x,t)=
                  o\left(\frac{\ln|x|}{|x|^{m+\lambda_1}}\right)  \ \ \textrm{if}\ p=p_c\ \textrm{and}\ \beta<\infty,\   \   \textrm{in the case of}\  (\ref{eqlower222}),
\]
as $|x|\to \infty$, uniformly in $t\in \mathbb{R}$.
\end{cor}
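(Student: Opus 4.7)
The plan is to read $u_t$ off the equation $u_t=\Delta u+|u|^{p-1}u$ by combining Lemma \ref{lemExp} with a matching asymptotic expansion of the nonlinear term $|u|^{p-1}u$ around the singular steady state $\varphi_\infty$. Since Lemma \ref{lemExp} already gives $\Delta u=-L^p|x|^{-pm}+o(|x|^{-(m+\lambda_1)})$ (or the log analogue), and $L^p|x|^{-pm}=\varphi_\infty^p$ by (\ref{eqSingular}) together with $pm=m+2$, the corollary will follow provided
\[
|u|^{p-1}u=\varphi_\infty^p+o\!\left(\frac{1}{|x|^{m+\lambda_1}}\right)\quad\text{(resp.\ }o(\ln|x|/|x|^{m+\lambda_1})\text{)}
\]
uniformly in $t\in\mathbb{R}$ as $|x|\to\infty$.

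For the expansion of the nonlinear term I would write $u=\varphi_\infty+\psi$ as in (\ref{eqPsi1}) and exploit the decay bound (\ref{eqPsi2}) from the proof of Lemma \ref{lemExp}. The first key point is that, because $\lambda_1>2$ by (\ref{eqLambda12}), the perturbation $\psi$ is asymptotically negligible relative to $\varphi_\infty\sim L|x|^{-m}$, so $u$ is positive and comparable to $\varphi_\infty$ for all sufficiently large $|x|$ and all $t$. Consequently $F(z)=|z|^{p-1}z$ is smooth along the segment joining $\varphi_\infty(x)$ to $u(x,t)$, and the mean value theorem supplies
\[
|u|^{p-1}u-\varphi_\infty^p=p\,\xi^{p-1}\psi,\qquad \xi=O(\varphi_\infty)=O(|x|^{-m}).
\]
Hence $\xi^{p-1}=O(|x|^{-m(p-1)})=O(|x|^{-2})$ by (\ref{eqmm}), and multiplying by the hypothesized decay $\psi=o(|x|^{-(m+\lambda_1-2)})$ in (\ref{eqlower}) (or its log variant from (\ref{eqlower222})) yields exactly the desired $o(|x|^{-(m+\lambda_1)})$ (resp.\ $o(\ln|x|/|x|^{m+\lambda_1})$) remainder, uniformly in $t$.

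Adding this expansion to Lemma \ref{lemExp} then gives $u_t=\Delta u+|u|^{p-1}u=o(|x|^{-(m+\lambda_1)})$ (resp.\ the log version) uniformly in $t\in\mathbb{R}$, which is the content of the corollary. I do not anticipate a real obstacle: the only subtlety is that, a priori, $p$ may fall below $2$ when $N$ is very large (since $p_c\to 1$ as $N\to\infty$), so one cannot directly Taylor-expand $F(z)=|z|^{p-1}z$ to second order with an $O(\varphi_\infty^{p-2}\psi^2)$ remainder without checking the positivity of $\xi$; using only the first-order mean value formula sidesteps this and produces the sharper estimate in one shot. The uniformity in $t$ is automatic because all bounds inherited from (\ref{eqPsi2}) and Lemma \ref{lemExp} are uniform in $t$.
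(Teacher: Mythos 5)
Your argument is correct and is essentially the paper's proof: read $u_t$ off the equation, substitute the $\Delta u$ expansion from Lemma \ref{lemExp}, and control $|u|^{p-1}u-\varphi_\infty^p$ by the first-order mean value theorem applied to $F(z)=|z|^{p-1}z$ with the perturbation $\psi=u-\varphi_\infty$, using $m(p-1)=2$ to get $O(|x|^{-2})\cdot o(|x|^{-(m+\lambda_1-2)})=o(|x|^{-(m+\lambda_1)})$ (and the log variant). Your remark on avoiding a second-order expansion when $p<2$ is a sensible precaution but not a divergence: the paper also only uses the $C^1$ mean-value estimate, encoded as $O(1)p\varphi_\infty^{p-1}\psi$.
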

\begin{proof}
We will only deal with the case where (\ref{eqlower}) holds, as the case of (\ref{eqlower222}) can be handled  analogously.

Recalling (\ref{eqPsi1}) and (\ref{eqPsi2}), we obtain from Lemma \ref{lemExp} that
\[\begin{array}{rcl}
    u_t & = & \Delta u+|u|^{p-1}u \\
      &   &   \\
      & = & -\varphi_\infty^p+o\left(\frac{1}{|x|^{m+\lambda_1}}\right)+ |\varphi_\infty+\psi|^{p-1}(\varphi_\infty+\psi) \\
      &   &   \\
      & = & O(1)p\varphi_\infty^{p-1}\psi+o\left(\frac{1}{|x|^{m+\lambda_1}}\right) \\
      &   &   \\
     & = &O\left(\frac{1}{|x|^2} \right) \frac{o(1)}{|x|^{m+\lambda_1-2}}+o\left(\frac{1}{|x|^{m+\lambda_1}}\right)\\
      &&\\
      &=&o\left(\frac{1}{|x|^{m+\lambda_1}}\right),
  \end{array}
\]
as $|x|\to \infty$, uniformly in $t\in \mathbb{R}$, as desired.
\end{proof}
\subsection{Proof of Theorem \ref{thmMine}}\label{subsec2}
\begin{proof}We will only deal with the case $p>p_c$, as the case of equality can be handled  analogously.

For the reader's convenience, we will first treat the case $\beta<\infty$. In fact,
without loss of generality, we will assume that $\beta=1$.

We begin with a preliminary observation that will be useful in the sequel. Since $u$ is a bounded solution of (\ref{eqEq}) in $\mathbb{R}^N\times \mathbb{R}$,  standard
interior estimates for linear parabolic equations \cite{lad,L,WangM} and Sobolev embeddings imply that \begin{equation}\label{equB}
                                            u \ \textrm{is bounded in}\ C^{2+\theta,1+\theta/2}(\mathbb{R}^N\times \mathbb{R})\ \textrm{for any}\ \theta\in (0,1).
                                          \end{equation}
In this regard, we point out that  $|u|^{p-1}u$ plays the role of the inhomogeneous term and the aforementioned estimates are applied in space-time cylinders of the form $\mathcal{C}_1({x,t})=\left\{ (y,\tau)\ :\ |y-x|<1,\ |\tau-t|<1\right\}$; one first applies the $W_p^{2,1}$ estimates, then the (elliptic) Sobolev Morrey embedding, and finally the $C^{2+\theta,1+\theta/2}$ estimates.

  The assertion of the theorem is  equivalent to
  \begin{equation}\label{eqvdef}
    v:=u_t
  \end{equation}being identically equal to zero. The rest of the proof is devoted to showing the latter property.

  Clearly $v$ satisfies the linearized equation
  \begin{equation}\label{eqv}
    v_t=\Delta v+p|u|^{p-1}v,\ \ x\in \mathbb{R}^N,\ t\in \mathbb{R}.
  \end{equation}
By virtue of (\ref{equB}),  $v$ is bounded in $\mathbb{R}^N\times \mathbb{R}$. In turn, arguing as before, standard parabolic estimates and Sobolev embeddings
yield \begin{equation}\label{eqvB}
                                            v \ \textrm{is bounded in}\ C^{2+\theta_0,1+\theta_0/2}(\mathbb{R}^N\times \mathbb{R})\ \textrm{for some}\ \theta_0\in (0,1),
                                          \end{equation} (we note that the above relation holds for any $\theta_0\in (0,1)$ if $p\geq 2$ or $u$ does not change sign).
  Moreover, on account of Corollary \ref{cor1}, we have
\begin{equation}\label{eqUpper}
v(x,t)=
                  o\left(\frac{1}{|x|^{m+\lambda_1}}\right)  \ \ \textrm{as}\  |x|\to\infty,\  \textrm{uniformly in}\ t\in \mathbb{R}.
\end{equation}

A crucial role will be played by the linearization of (\ref{eqSS}) on the radial steady state $\varphi_1=\Phi$ which bounds $|u|$. From (\ref{eqRescal}) and (\ref{eqMonot}), it follows that the radially symmetric function
\[
Z:=\Phi+\frac{1}{m}x\cdot \nabla \Phi
\]
is   \emph{positive} and belongs to the kernel of the aforementioned linearized operator, i.e., it satisfies
\begin{equation}\label{eqZ}
  \Delta Z+p\Phi^{p-1}Z=0,\ \ x\in \mathbb{R}^N.
\end{equation}
Furthermore, according to (\ref{eqAs}) (which can be differentiated in the obvious way), we have
\begin{equation}\label{eqAsZ}
  Z(x)=\left\{\begin{array}{ll}
                \frac{\lambda_1}{m}\frac{|a|}{|x|^{m+\lambda_1}}+o\left(\frac{1}{|x|^{m+\lambda_1}}\right)  & \textrm{if}\ \ p>p_c, \\
                  &   \\
               \frac{\lambda_1}{m}\frac{|b|\ln |x|}{|x|^{m+\lambda_1}}+O\left(\frac{1}{|x|^{m+\lambda_1}}\right)  & \textrm{if}\ \ p=p_c,
              \end{array}
  \right.\ \ \textrm{as}\ |x|\to \infty,\end{equation}
(see also \cite[Lem. 2.3]{fila}).

The importance of $Z$ for our purposes is that it serves as a positive
 \emph{super-solution} of (\ref{eqv}). Indeed, using (\ref{eqSandwich}) (with $\beta=1$),
we get
\begin{equation}\label{eqZeq}
Z_t-\Delta Z-p|u|^{p-1}Z\stackrel{(p>1)}{\geq}-\Delta Z-p\Phi^{p-1}Z\stackrel{(\ref{eqZ})}{=}0.
\end{equation}
The key observation behind our proof is that (\ref{eqUpper}) and (\ref{eqAsZ}) give
\begin{equation}\label{eqQuot}
  \lim_{|x|\to \infty}\frac{v(x,t)}{Z(x)}=0\  \textrm{ uniformly in}\ t\in \mathbb{R}.
\end{equation}

Armed with the above information, we will  reach our goal, i.e. show that $v\equiv 0$, by adapting Serrin's sweeping principle \cite{Se} from elliptic PDEs (see also \cite{sourdis} and the references therein).
For this purpose, let us consider the set
\[
  \Lambda=\left\{\lambda\geq 0\ :\ \mu Z\geq v\ \textrm{in}\ \mathbb{R}^N\times \mathbb{R} \ \textrm{for every}\ \mu \geq \lambda\right\}.
\]
Our  objective is to show that $\Lambda = [0, \infty)$, which will yield $v\leq 0$. We can also apply this
 argument, with $v$ replaced by $-v$, to obtain  $v\geq  0$ and therefore conclude.

We first prove that $\Lambda\neq \emptyset$, and thus by continuity \begin{equation}\label{eqLambda}
                                                         \Lambda = [\tilde{\lambda}, \infty)\ \textrm{for some}\  \tilde{\lambda}\geq 0.
                                                       \end{equation}
In view of   (\ref{eqQuot}), it is evident that there exists a large $M>0$ such that
\[
 v(x,t)\leq Z(x)\ \textrm{if}\ |x|\geq M,\ t\in \mathbb{R}.
\]
Hence, recalling (\ref{eqvB}), we deduce that
\begin{equation}\label{eqLambdaBar}
v(x,t)\leq \bar{\lambda} Z(x),\ \ (x,t)\in\mathbb{R}^N\times \mathbb{R},
\
\textrm{with}
\
\bar{\lambda}=1+\frac{\sup_{\mathbb{R}^N\times \mathbb{R}}|v|}{\min_{|x|\leq M}Z}.
\end{equation}
This means   that $\bar{\lambda}\in \Lambda$, i.e. (\ref{eqLambda}) is valid for some $\tilde{\lambda}\in [0,\bar{\lambda}]$.

In order to establish that $\tilde{\lambda} = 0$, as desired, we will argue by contradiction. So let us suppose that $\tilde{\lambda} > 0$.
To show that this is absurd, by the definition of the set ${\Lambda}$ and (\ref{eqLambda})
it suffices to prove that there exists a small $\delta > 0$ such that
\begin{equation}\label{eqLambdaTilt}
  (\tilde{\lambda}-\delta)Z(x)>v(x,t),\ \ (x,t)\in\mathbb{R}^N\times \mathbb{R}.
\end{equation}

Suppose the above relation were false. Then we could find $\lambda_n <\tilde{\lambda}$ with $\lambda_n \to \tilde{\lambda}$˜, $x_n \in \mathbb{R}^N$
 and $t_n \in \mathbb{R}$
 such that
\begin{equation}\label{eqContra}
  v(x_n, t_n) \geq \lambda_nZ(x_n), \ \ n\geq 1.
\end{equation}

By virtue of (\ref{eqQuot}), and our assumption that $\tilde{\lambda}>0$, we infer that the sequence $\{x_n\}$ is bounded. Hence, passing to a subsequence if necessary, we may assume that
\begin{equation}\label{eqX}
  x_n\to x_\infty\in \mathbb{R}^N.
\end{equation}

In contrast, we claim  that the sequence $\{t_n\}$ is unbounded.
Suppose, contrary to our claim,  that $\{t_n\}$ is bounded. Then, possibly along a further subsequence, we get  $t_n\to t_\infty$ for some $t_\infty\in \mathbb{R}$. Therefore, in view of the comments before (\ref{eqContra}), and (\ref{eqX}), letting $n \to\infty$ in the former relation yields $v(x_\infty, t_\infty) \geq \tilde{\lambda}  Z(x_\infty).$ On the other hand, since  $\tilde{\lambda}\in \Lambda$ (recall (\ref{eqLambda})),
 by the definition of the latter set we assert that \begin{equation}\label{eqvleqZ}
          v(x, t) \leq \tilde{\lambda}  Z(x),\ \   (x,t)\in \mathbb{R}^N\times \mathbb{R}.
        \end{equation} Consequently, on account of (\ref{eqv}) and (\ref{eqZeq}), we deduce from the strong maximum principle for linear parabolic equations \cite{lad,L} that $v\equiv \tilde{\lambda}  Z$. However, this is not possible by (\ref{eqQuot}) and our assumption that $\tilde{\lambda}>0$. It is worth noting that another way to derive a contradiction from the last identity is to recall the definition of $v$ from (\ref{eqvdef}) and the bound (\ref{eqSandwich}) for $u$.

Let us assume that
 $t_n\to -\infty$  (up to a subsequence); the scenario $t_n\to +\infty$ can be handled in the same way ($\{t_n\}$ can actually be shown to be bounded
from above by the parabolic maximum principle similarly to \cite[Lem. 2.1 (i)]{ChenLi}). Let us now consider the time translated functions
 \[
 U_n(x,t)=u(x,t+t_n)\ \ \textrm{and}\ \ V_n(x,t)=v(x,t+t_n), \  \ n\geq 1.
 \]
 Clearly, $U_n$ continues to satisfy (\ref{eqEq}), (\ref{eqSandwich}) (with $\beta=1$), and (\ref{equB}) uniformly with respect to $n$; while $V_n$ satisfies
 \[
    V_t=\Delta V+p|U_n|^{p-1}V,\ \ x\in \mathbb{R}^N,\ t\in \mathbb{R},
  \]
  and (\ref{eqvB}) uniformly with respect to $n$.
We also note that (\ref{eqQuot}) becomes
\[
  \lim_{|x|\to \infty}\frac{V_n(x,t)}{Z(x)}=0\  \textrm{ uniformly in}\ t\in \mathbb{R}\ \textrm{and}\ n\geq 1.
\]
Moreover, from (\ref{eqContra}) and (\ref{eqvleqZ}) we obtain
\begin{equation}\label{eqRecal4}
    V_n(x_n, 0) \geq \lambda_nZ(x_n) \ \ \textrm{and}\ \
          V_n(x, t) \leq \tilde{\lambda}  Z(x),\ \   (x,t)\in \mathbb{R}^N\times \mathbb{R},
        \end{equation}respectively.

By  the aforementioned uniform H\"{o}lder estimates and a standard diagonal-compactness argument, passing to a further subsequence if necessary, we may assume that
\begin{equation}\label{eqMasviU}U_n \to U_\infty \ \ \textrm{in}\  C^{2,1}_{loc}(\mathbb{R}^N\times \mathbb{R}),\end{equation} where $U_\infty$ is a solution of (\ref{eqEq}) such that
\begin{equation}\label{eqSandwich22}
   |U_\infty(x,t)|\leq \Phi (x),\ \ (x, t)\in \mathbb{R}^N\times \mathbb{R}.
\end{equation}
In the same manner, passing to a further subsequence if needed, we may assume that \begin{equation}\label{eqMasviV}V_n \to V_\infty \ \ \textrm{in}\ C^{2,1}_{loc}(\mathbb{R}^N\times \mathbb{R}),\end{equation} where $V_\infty$ is a solution of
\begin{equation}\label{eqVlast}
  V_t=\Delta V+p|U_\infty|^{p-1}V,\ \ x\in \mathbb{R}^N,\ t\in \mathbb{R},
\end{equation}
  such that
  \begin{equation}\label{eqQuot22}
  \lim_{|x|\to \infty}\frac{V_\infty(x,t)}{Z(x)}=0\  \textrm{holds uniformly in}\ t\in \mathbb{R}.
\end{equation}
Furthermore,  according to (\ref{eqRecal4}) (keeping in mind (\ref{eqX})), we have
\[
    V_\infty(x_\infty, 0) \geq \tilde{\lambda}Z(x_\infty) \ \ \textrm{and}\ \
          V_\infty(x, t) \leq \tilde{\lambda}  Z(x),\ \   (x,t)\in \mathbb{R}^N\times \mathbb{R}.
        \]
Arguing as in (\ref{eqZeq}), but using (\ref{eqSandwich22}) instead of (\ref{eqSandwich}), we can see that $Z$ is a super-solution of (\ref{eqVlast}) (alternatively, one can replace $t$ with $t+t_n$ in (\ref{eqZeq}) and pass to the limit).
As before, we can arrive at a contradiction which proves the theorem under the restriction that $\beta<\infty$.

We now turn to the case where $\beta=\infty$. This can be treated by choosing a 'stronger' positive supersolution, coming from (\ref{eqKernelSing}), and carefully modifying the above arguments near $x=0$. We will outline the necessary changes below.

Let us first note that, due to (\ref{eqSandwich}) (with $\beta=\infty$), it follows in a standard way that the  H\"{o}lder boundedness of $u$ and $v$ in (\ref{equB}) and (\ref{eqvB}), respectively, still holds provided that $|x|\geq \delta$ (for any $\delta>0$). These bounds, however, may degenerate as $\delta\to 0$ because the function $u(0,\cdot)$ could be unbounded.
The task is now to find an estimate for $v$ near $x=0$ which echoes  (\ref{eqSandwich}) for $u$. By the same method  as in Lemma \ref{lemExp}, it is easy to check that
\[
\Delta u=O(|x|^{-m-2}),\  \ \textrm{uniformly in}\ t\in \mathbb{R}, \ \textrm{as}\ x\to 0.
\]
Hence, via (\ref{eqEq}), we obtain

\[v=O(|x|^{-m-2}),\ \ \textrm{uniformly in}\ t\in \mathbb{R},\ \textrm{as}\ x\to 0.\]
Let us note in passing that the above bound is actually valid for all $x\neq 0$. Be that as it may, owing to (\ref{eqLambda12}), it is worse than (\ref{eqUpper}) as $|x|\to \infty$.

We are now in position to sweep with\begin{equation}\label{eqZinfty1234}Z_\infty:=\frac{1}{|x|^{m+\lambda_1}} ,\ \ x\neq 0,\end{equation}which is in the kernel of the linearization of (\ref{eqSS}) on $\varphi_\infty$ (recall (\ref{eqKernelSing})) and furnishes a supersolution to (\ref{eqv}) (again by (\ref{eqSandwich})). We emphasize that, in light of (\ref{eqLambda12}), $Z_\infty$ blows up at the origin faster than  $|v|$ can grow there as $t\to \pm \infty$ according to (\ref{eqSandwich}). More precisely, we have\begin{equation}\label{eqDiaz}
\lim_{x\to 0}\frac{v}{Z_\infty}=0\ \ \textrm{uniformly in}\ t\in\mathbb{R}.
\end{equation}

Naturally, in the definition of the set $\Lambda$ we have to require that the corresponding inequality (with $Z_\infty$ in place of $Z$) holds in $\left(\mathbb{R}^N\setminus \{0\}\right)\times \mathbb{R}$. This of course leads to some minor changes in regards to (\ref{eqLambdaBar}) and (\ref{eqLambdaTilt}). The point is that (\ref{eqDiaz}) forces $x_\infty\neq 0$ in the corresponding relation to (\ref{eqX}).

The only essential differences from this point on occur when dealing with the scenario
 $t_n\to -\infty$. We point out that the corresponding convergence to that in (\ref{eqMasviU}) holds over compacts that do not intersect the origin $\{x=0\}$.  In turn, we get the corresponding convergence property to (\ref{eqMasviV}) again over compacts away from the origin,  which provides  a  solution to (\ref{eqVlast}) for $x\neq 0$. Since $x_\infty\neq 0$ as we have remarked above, we can still apply the strong maximum principle around the point $(x_\infty,0)$ and arrive at a contradiction as before.
\end{proof} \begin{rem}\label{remLambdas}
From (1.13) in \cite{py} we have
\begin{equation}\label{eqlambda1}\begin{array}{c}
                                   \lambda_1=\frac{N-2-2m-\sqrt{(N-2-2m)^2-8(N-2-m)}}{2}, \\
                                     \\
                                   \lambda_2=\frac{N-2-2m+\sqrt{(N-2-2m)^2-8(N-2-m)}}{2}.
                                 \end{array}
\end{equation}

We first note that \[N-2m-2>4\ \ \textrm{ammounts to}\ \ \frac{4}{p-1}<N-6,\]
which holds  for $p\geq p_c$ on account of (\ref{eqJL}) (see also (4.70) in \cite{kelei}).
The inequality (\ref{eqLambda12}) then follows readily by observing that the quantity under the square root in the formula for
$\lambda_1$ can be conveniently bounded from above by
\[(N-2-2m)^2-8(N-2-2m)+16=(N-2m-6)^2.\]\end{rem}\begin{rem}\label{remGibbons}
As we have already mentioned, our proof of Theorem \ref{thmMine} is partly motivated by some recent rigidity results for entire solutions to a class of elliptic PDEs, mostly related to the Gibbons conjecture for the Allen-Cahn equation (see for instance
\cite{BH,weiChan}). We cannot resist the temptation to present a rigidity result of this kind for the steady state problem of (\ref{eqEq}).

For $N\geq 11$ and $M\geq 1$, we consider the elliptic equation
\[\Delta u+|u|^{p-1}u=0, \ z\in \mathbb{R}^{N+M}, \ \textrm{with}\ p\geq p_c(N),\]
and write $z=(x,y)$, $x\in \mathbb{R}^N$ and $y\in \mathbb{R}^M$.
Our basic assumption is that  a (classical) solution $u$ satisfies the bound (\ref{eqSandwich}) with $y$ in place of $t$ (with the obvious meaning).
Moreover, we assume that
\[
u(x,y)=\frac{L}{|x|^m}+o\left(\frac{1}{|x|^{m+\lambda_1-1}}\right), \ \textrm{uniformly in}\ y\in \mathbb{R}^M,\ \textrm{as}\ |x|\to \infty.
\]
We note in passing that these last two  assumptions are satisfied  if $u$ lies between two ordered positive radial solutions of the reduced problem (\ref{eqSS}) (in direct analogy to the assumption (\ref{eqSandwich9}) of \cite{py}).
Our assertion is that $u$ depends only on $x$.

The proof is a simple adaptation of that of Theorem \ref{thmMine}. Firstly, in the same manner as in the first part   of the proof of Lemma \ref{lemExp} we have
\[
|\nabla_y u|=o\left(\frac{1}{|x|^{m+\lambda_1}}\right), \ \textrm{uniformly in}\ y\in \mathbb{R}^M,\ \textrm{as}\ |x|\to \infty.
\]
Then, we can conclude that every component of the above gradient is identically equal to zero by  sweeping with $Z_\infty(x)$ as in the proof of Theorem \ref{thmMine}.
We point out that we could  have assumed an asymptotic relation that mirrors (\ref{eqlower222}) instead.
\end{rem}

\subsection{Proof of Theorem \ref{thmMine2}}\label{subsec3}\begin{proof}
We will only present the proof for $p>p_c$; the case of equality can be treated similarly, so we will just point out the main difference at the end of the proof (see also Remark \ref{remITE} below).

As in \cite[Prop. 3.4]{ni}, to establish that $u$ is radial we will show that $T_{ij}u\equiv 0$, $i,j=1,\cdots,N$, $i\neq j$, where
\[
T_{ij}=x_i\frac{\partial}{\partial x_j}-x_j\frac{\partial}{\partial x_i},\ \ x=(x_1,\cdots,x_N),
\]
denotes the tangential gradient along the circle of radius $|x|$ and center at the origin on the  $x_ix_j$ hyperplane.

Applying $T_{ij}$ to the equation for $u$, since the Laplacian is rotationally invariant, we see that
\begin{equation}\label{eqSweep22}
\Delta (T_{ij}u)+p|u|^{p-1}T_{ij}u=0,\ \ x\in \mathbb{R}^N.
\end{equation}

In the same vein as in the previous sections, we proceed to derive an asymptotic bound for $T_{ij}u$ as $|x|\to \infty$. To this end, recalling the formula for the singular solution $\varphi_\infty$ from (\ref{eqSingular}), we can write
\begin{equation}\label{eqGive}
  u=\varphi_\infty+\frac{a}{r^{m+\lambda_1}}+\psi,\ \ x\neq0,
\end{equation}
with
\[\psi=o\left(\frac{1}{|x|^{m+\lambda_1}}\right)\ \ \textrm{as}\ |x|\to \infty.\]
Substituting this in the equation for $u$, and keeping in mind the discussion leading to (\ref{eqKernelSing}), we obtain
\[
\begin{array}{rcl}
  -\Delta \psi & = & -\varphi_\infty^p-p\varphi_\infty^{p-1}\frac{a}{r^{m+\lambda_1}}+\left(\varphi_\infty+\frac{a}{r^{m+\lambda_1}}\right)^p \\
    &   &   \\
    &   & +\left|\varphi_\infty+\frac{a}{r^{m+\lambda_1}}+\psi\right|^{p-1}\left(\varphi_\infty+\frac{a}{r^{m+\lambda_1}}+\psi\right)-\left(\varphi_\infty+\frac{a}{r^{m+\lambda_1}}\right)^p \\
    &   &   \\
    & = & \frac{O(1)}{r^{(p-2)m+2m+2\lambda_1}}+\frac{o(1)}{r^{2+m+\lambda_1}} \\
    &   &   \\
    & = & \frac{O(1)}{r^{2+m+2\lambda_1}}+\frac{o(1)}{r^{2+m+\lambda_1}} \\
    &   &   \\
    & = & \frac{o(1)}{r^{2+m+\lambda_1}}
\end{array}
\]
as $|x|\to \infty$.
By working as in the first part of the proof of Lemma \ref{lemExp}, we deduce that
\[|\nabla\psi|=o\left(\frac{1}{|x|^{m+\lambda_1+1}}\right)\ \ \textrm{as}\ |x|\to \infty.\]
Consequently, going back to $u$ via (\ref{eqGive}), we get
\[
\frac{\partial u}{\partial x_j} =-\frac{x_j}{|x|}\frac{Lm}{|x|^{m+1}}-\frac{x_j}{|x|}\frac{a(m+\lambda_1)}{|x|^{m+\lambda_1+1}}+o\left(\frac{1}{|x|^{m+\lambda_1+1}} \right)
\ \ \textrm{as}\ |x|\to \infty,
\]
$j=1,\cdots,N$.
Hence, we infer that
\[
T_{ij}u=o\left(\frac{1}{|x|^{m+\lambda_1}} \right)
\ \ \textrm{as}\ |x|\to \infty,\ \ i,j=1,\cdots,N,\ i\neq j.
\]

Armed with the above information, we can sweep in the equation (\ref{eqSweep22}) for $T_{ij}u$ with the positive super-solution $Z_\infty$ (recall the definition (\ref{eqZinfty1234})) as in the proof of Theorem \ref{thmMine} to find  that $T_{ij}u\leq 0$ for any $i\neq j$. Doing the same for $-T_{ij}u$, we conclude  that $T_{ij}u$ is identically equal to zero for any $i\neq j$, which completes the proof of the theorem for $p>p_c$.

In the case $p=p_c$, the main difference is that now we have to sweep with the positive  weak supersolution
\[
\varphi_\infty-|u|
\]
of the linearized equation \footnote{This supersolution was brought to my attention by L. Dupaigne in relation to a point in \cite{sourggrok} (see also \cite[Prop. 1.3.2]{dupaigne2020}).} (see   \cite[Lem. I.1]{berestykia} for supersolutions whose gradients may have jump discontinuities). We note that we resort to this supersolution because  the analog of $Z_\infty$ for this purpose is sign changing due to the presence of the logarithm (recall (\ref{eqKernelSing})). As the reader may have already noticed, we needed to assume that $b<0$ so that the above supersolution has a good asymptotic behaviour as $|x|\to \infty$. On the other hand, if  $\beta<\infty$ in the bound (\ref{eqSandwich}) then it is easy to see that the proof goes through with  the usual positive supersolution $\varphi_\beta+\frac{1}{m}x\cdot \nabla \varphi_\beta$ without such an (a priori) information on $b\leq 0$.
\end{proof}

\begin{rem}\label{remITE}
If $p=p_c$, a careful examination of the above proof reveals that the assertion of the theorem
continues to hold under the weaker assumption that
\[u=\frac{L}{|x|^m}+b\frac{\ln |x|}{|x|^{m+\lambda_1}}+o\left(\frac{\ln |x|}{|x|^{m+\lambda_1}}\right)\ \ \textrm{as}\ |x|\to \infty,\]
for some $b<0$.
\end{rem}

\begin{rem}
If $p>p_c$, then Theorem \ref{thmMine2} follows  from Theorem 1.1 of \cite{PYA} concerning the asymptotic behaviour of the Cauchy problem (\ref{eqCauchy}).
Nevertheless, our proof carries over with just slight modifications to the case where $u$ is a solution in $\mathbb{R}^N\setminus \{0\}$ (with $p\geq p_c$).
Actually, it is a simple matter to check that $T_{ij}u=O(|x|^{-m})\ll Z_\infty$ as $x\to 0$ which is more than what is needed for the last part of the proof of Theorem \ref{thmMine} to carry over to this setting.
\end{rem}

\begin{rem}\label{remWang}
An important observation for the steady states of (\ref{eqEq}) that are bounded in absolute value by the singular solution $\varphi_\infty$ is that
\begin{equation}\label{eqlimes}
  \lim_{|x|\to \infty}|x|^{\frac{2}{p-1}}u(x)\  \textrm{exists and is equal to}\ -L\ \textrm{or}\ 0 \ \textrm{or}\ L,
   \end{equation}where $L>0$ is as in (\ref{eqSingular}).
This can be proven in the spirit of  the classical regularity theory of minimal
surfaces: using a suitable   monotonicity formula, a blow up analysis, and   classifying
the limiting (conic) solutions that are equivariant by dilation. We give only the main ideas of the proof.

For $R\gg 1$, we consider the rescalings
\begin{equation}\label{eqSkalinas}
w_R(x)=R^{\frac{2}{p-1}}u(Rx),\ \ x\in \mathbb{R}^N.
\end{equation}
Clearly, $w_R$ is still a steady state of (\ref{eqEq}) and  bounded in absolute value by $\varphi_\infty$.
Therefore, by standard elliptic estimates and the usual diagonal argument, we can find a diverging sequence $\{R_n\}$ such that
\begin{equation}\label{eqWangUoy1}
w_{R_n}\to w_\infty\  \textrm{in}\ C^2_{loc}\left(\mathbb{R}^N\setminus \{0\} \right)\  \textrm{as}\ n\to \infty,
\end{equation}
where $w_\infty$ is a steady state for $x\neq 0$ and bounded in absolute value by $\varphi_\infty$.

Using the monotonicity formula of \cite{pacard}, which is invariant under the  scaling in (\ref{eqSkalinas}),  as in the case of minimal surfaces or free boundary problems we deduce   that
the 'blow up' limit $w_\infty$ is homogeneous, i.e.,
\[
w_\infty(\lambda x)=\lambda^{-\frac{2}{p-1}}w_\infty(x),\ x\in \mathbb{R}^N\setminus \{0\}, \ \forall \ \lambda>0,
\]
(see also \cite{kelei}).

By the above property and the fact that $w_\infty \in C^\infty\left(\mathbb{R}^N\setminus \{0\} \right)$, it follows that there exists an $f\in C^\infty\left(\mathbb{S}^{N-1} \right)$
($\mathbb{S}^{N-1}$ stands for the unit sphere in $\mathbb{R}^N$)
such that in polar coordinates we have
\[
w_\infty(r,\theta)=r^{-\frac{2}{p-1}}f(\theta),\ \ r>0,\ \theta\in \mathbb{S}^{N-1}.
\]
It is a simple matter to check  that $f$ satisfies
\begin{equation}\label{eqBeltra2}
  \Delta_{\mathbb{S}^{N-1}}f-L^{p-1}f+|f|^{p-1}f=0\ \ \textrm{on}\ \mathbb{S}^{N-1},
\end{equation}
where $\Delta_{\mathbb{S}^{N-1}}$ denotes the Laplace-Beltrami operator on $\mathbb{S}^{N-1}$ (with the standard metric).
On the other hand, since  $w_\infty$ is bounded in absolute value by $\varphi_\infty$ it follows that
\begin{equation}\label{eqfb}
\left|f(\theta) \right|\leq L,\ \ \theta \in \mathbb{S}^{N-1}.
\end{equation}
Testing the equation (\ref{eqBeltra2}) by $f$ and integrating by parts yields
\[
\int_{\mathbb{S}^{N-1}}^{}\left\{|\nabla_{\mathbb{S}^{N-1}}f|^2+f^2\left(L^{p-1}-|f|^{p-1} \right)\right\}dS(x)=0.
\]
Consequently, according to (\ref{eqfb}) we get $f\equiv -L$ or $f\equiv 0$ or $f\equiv L$. Since the blow up limits are finitely many, it is easy to see that there has to be a unique one. So,  the convergence in (\ref{eqWangUoy1}) takes place for any diverging sequence $\{R_n\}$. By the definition of the blow up $w_R$, we conclude that
(\ref{eqlimes}) holds.

If $u$ is sign definite, then this asymptotic behaviour forces it to be  radially  symmetric
for $p$ in certain regimes.
Indeed, assuming that $u$ is positive (without loss of generality) and that  the  last case in (\ref{eqlimes}) is true,  this property can be deduced from the discussion  in Subsection \ref{subsubRadialp1}   at least when  $p\in [p_c,\infty)\setminus [(N+1)/(N-3),N/(N-4))$. On the other hand, if the limit in (\ref{eqlimes}) is the trivial one,
the radial symmetry property can still be shown  by the method of moving planes (for any $p>1$, see  \cite[Thm. 2.6]{pacella}).

If $u$ is sign changing,  the  aforementioned method cannot be applied. Nevertheless, we can assert that $u$ is identically equal to zero in the case of the trivial limit in (\ref{eqlimes}) as in \cite[pg. 553]{farina22}. 
One may ask whether the radial symmetry of $u$ is still true in the other cases of (\ref{eqlimes}). This question is at present far from being solved.
\end{rem}

\textbf{Acknowledgments.} The author gratefully acknowledges that his motivation for the connection with mean curvature flow in Subsection \ref{subsecOpen} came from
an open discussion of Prof. Cabr\'{e} with Prof. del Pino during the course of the latter in the summer school JISD2018. He wishes to express his thanks to Prof. Pol\'{a}\v{c}ik for bringing to his attention the papers \cite{filaYana} and \cite{polacikQui}, following the first version of the current work. Lastly, the author is indebted to IACM of FORTH, where this paper was written, for the hospitality.  This work has received funding from the Hellenic Foundation for Research and Innovation (HFRI) and the General Secretariat for Research and Technology (GSRT), under grant agreement No 1889.


\begin{thebibliography}{100}
\bibitem{ads}{\sc S. Angenent}, {\sc P. Daskalopoulos} and {\sc N. Sesum},
\emph{Unique asymptotics of ancient convex mean curvature flow solutions},
J. Differential Geom.
 \textbf{111} (2019), 381--455.
\bibitem{berestykia}
{\sc H. Berestycki} and {\sc P.-L. Lions}, \emph{Some applications of the method of super and subsolutions}, in Bifurcation and Nonlinear Eigenvalue Problems, Lecture Notes in Mathematics
\textbf{782}, Springer-Verlag, Berlin, pp. 16--41, 1980.\bibitem{BH}
{\sc H. Berestycki} and {\sc F. Hamel}, \emph{Generalized travelling waves for reaction-diffusion equations},
In: Perspectives in Nonlinear Partial Differential Equations, in honor of H. Brezis, Contemp. Math. \textbf{446}, Amer. Math. Soc.,
Providence, RI, 2007, 101--123.

\bibitem{bbh}
{\sc F. Bethuel}, {\sc H. Brezis} and {\sc F. Helein},
\emph{Asymptotics for the minimization
of a Ginzburg-Landau functional},
Calc. Var. \textbf{1} (1993), 123--148.

\bibitem{verong}
{\sc M.-F. Bidaut-Veron} and {\sc L. Veron},
\emph{Nonlinear elliptic equations on compact Riemannian manifolds and asymptotics of Emden equations},
Invent. Math. \textbf{106}   (1991), 489--539.
\bibitem{busca}
{\sc J. Busca}, {\sc M.A. Jendoubi} and {\sc P. Polacik}, \emph{Convergence to equilibrium for semilinear parabolic problems
in $\mathbb{R}^N$}, Comm. Partial Differential Equations \textbf{27} (2002), 1793--1814.\bibitem{cabreLNM} {\sc X. Cabr\'{e}} and {\sc G. Poggesi}, \emph{Stable solutions to some elliptic problems: Minimal cones, the Allen-Cahn equation, and blow-up solutions},
in Geometry of PDEs and Related Problems,  C. Bianchini, A. Henrot, R. Magnanini (Eds.), Lecture Notes in Mathematics, CIME Foundation Subseries, Springer, pp. 1--45, 2018.

\bibitem{weiChan}
{\sc H. Chan} and {\sc J. Wei}, \emph{On De Giorgi's conjecture: Recent progress and open problems},
Science China Mathematics
\textbf{61}
(2018), 1925--1946.


\bibitem{ChenLi}
{\sc W. Chen} and {\sc C. Li}, \emph{Classification of solutions of some nonlinear elliptic equations},
Duke Math. J.
\textbf{63} (1991), 615--622.

\bibitem{Cmat}
{\sc C. Collot}, {\sc P. Rapha\"{e}l} and {\sc J. Szeftel},
\emph{On the stability of Type I blow up for the energy super critical heat equation},
 Memoirs of the American Mathematical Society
    \textbf{260} (2019), No. 1255.



\bibitem{delCIME}
 {\sc M. del Pino}, \emph{Bubbling blow-up in critical parabolic problems}. In Nonlocal and Nonlinear Diffusions and Interactions: New Methods and Directions, pages 73--116. Springer, 2017.
\bibitem{dupaigne2020}
{\sc L. Dupaigne}, Stable solutions of elliptic partial differential equations, Taylor $\&$ Francis, 2011.\bibitem{Evans} {\sc L.C. Evans}, Partial Differential Equations: Second Edition, Graduate Series in Mathematics  \textbf{19}, AMS, 2010.

\bibitem{farina22}
 {\sc A. Farina}, \emph{On the classification of solutions of Lane-Emden equation on unbounded domains of $\mathbb{R}^N$},
J. Math. Pures Appl. \textbf{87}
(2007), 537--561.



\bibitem{fm}
{\sc M. Fila} and {\sc H. Matano}, \emph{Blow-up in nonlinear heat equations from the dynamical systems point of view},
in: B. Fiedler (Ed.), Handbook of Dynamical Systems, Vol. 2, North-Holland, Amsterdam, 2002,
pp. 723--758.




\bibitem{fila}
{\sc M. Fila},  {\sc M. Winkler}  and {\sc E. Yanagida},
\emph{Convergence rate for a parabolic equation with
supercritical nonlinearity},
J. Dynam.   Differential Equations \textbf{17} (2005), 249--269.

\bibitem{filaYana}
{\sc M. Fila} and {\sc E. Yanagida},
\emph{Homoclinic and heteroclinic orbits
for a semilinear parabolic equation}, Tohoku Math. J.
\textbf{63} (2011), 561--579.




\bibitem{fk}
{\sc S. Filippas} and {\sc R.V. Kohn}, \emph{Refined asymptotics for the blow up of $u_t-\Delta u=u^p$}, Comm. Pure Appl.
Math. \textbf{45} (1992), 821--869.
\bibitem{flemg}
{\sc W.H. Fleming} and {\sc R.W. Rishel},
Deterministic and stochastic optimal control,   Springer, 1975.
\bibitem{GK}
{\sc Y. Giga} and {\sc R.V. Kohn}, \emph{Asymptotically self-similar blow-up of semilinear heat equations}, Comm. Pure Appl. Math.  \textbf{38} (1985),   297--319.


\bibitem{GT}
{\sc D. Gilbarg} and {\sc N.S. Trudinger}, Elliptic partial differential equations of second order, Corr. 3rd printing 1998 Edition, Berlin Heidelberg New York, 1983.

\bibitem{gui}
{\sc C. Gui}, {\sc W.-M. Ni} and {\sc X. Wang}, \emph{On the stability and instability of positive steady states of a semilinear
heat equation in $\mathbb{R}^n$}, Comm. Pure Appl. Math. \textbf{45} (1992), 1153--1181.


\bibitem{gnw22}
{\sc C. Gui}, {\sc W.-M. Ni} and {\sc X. Wang}, \emph{Further study on a nonlinear heat equation}, J. Differential Equations \textbf{169} (2001), 588--613.

\bibitem{GUO}
 {\sc Z. Guo}, \emph{On the symmetry of positive solutions of the Lane-Emden equation with supercritical exponent}, Adv. Differential Equations \textbf{7} (2002),   641--666.

\bibitem{H}
{\sc D. Henry},   Geometric theory of semilinear parabolic equations, Lecture Notes in Math. \textbf{840}, Springer-Verlag, Berlin, 1981.

\bibitem{HV}
 {\sc M.A. Herrero} and  {\sc J.J.L. Velazquez}, \emph{A blow up result for semilinear heat equations in the supercritical
case}, unpublished preprint (1994).
\bibitem{Amat}{\sc T. Ilmanen}, Lectures on mean curvature flow and related equations, lecture notes,
ICTP, Trieste, 1995, Draft Version, recompiled August 1998.

\bibitem{lad}{\sc O.A. Ladyzhenskaya}, {\sc V.A. Solonnikov} and {\sc N.N. Urall'ceva}, Linear and quasilinear equations of parabolic type,
Translations of Mathematical Monographs, vol. 23, American Mathematical Society, Providence, RI, 1968
(Russian Original: “Nauka”, Moscow 1967).

\bibitem{Davini} {\sc G.R. Lawlor},
\emph{A sufficient criterion for a cone to be area-minimizing},
Memoirs of the American Mathematical Society
  \textbf{91} (1991), No. 446.

\bibitem{L} {\sc G.M. Lieberman}, Second order parabolic differential equations, World Scientific, Singapore, 1996.

\bibitem{MM} {\sc H. Matano} and {\sc F. Merle}, \emph{Asymptotic soliton resolution for solutions of the critical nonlinear heat equation},
preprint (2016).
\bibitem{mz} {\sc F. Merle} and {\sc H. Zaag}, \emph{Optimal estimates for blow up rate and behavior for nonlinear heat equations},
Comm. Pure Appl. Math. \textbf{51}  (1998), 139--196.

\bibitem{ni}
{\sc W.-M. Ni},   \emph{Qualitative properties of solutions to elliptic problems}, in Handbook of Differential Equations: Stationary Partial Differential Equations Vol. 1 (2004), 157--233.
\bibitem{pacard}
{\sc F. Pacard}, \emph{Partial regularity for weak solutions of a nonlinear elliptic equation},
Manuscripta Math. \textbf{79} (1993),  161--172.

\bibitem{pacella}
{\sc F. Pacella} and {\sc M. Ramaswamy}, \emph{Symmetry of solutions of elliptic equations via maximum principles}, in Handbook of Differential Equations: Stationary Partial Differential Equations
Vol. 6 (2008),   269--312.



\bibitem{PYA}
 {\sc P. Polacik} and {\sc E. Yanagida}, \emph{On bounded and unbounded global solutions of a supercritical semilinear
heat equation}, Math. Ann. \textbf{327} (2003), 745--771.


\bibitem{py}
{\sc P. Polacik} and {\sc E. Yanagida}, \emph{A Liouville property and quasiconvergence for a
semilinear heat equation}, J. Differential Equations \textbf{208} (2005), 194--214.
\bibitem{pg}{\sc P. Polacik} and  {\sc P. Quittner}, \emph{A Liouville-type theorem and the decay of radial solutions of a semilinear heat equation},
 Nonlinear Anal.
\textbf{64} (2006), 1679--1689.

\bibitem{polacikQui}
{\sc P. Polacik} and {\sc P. Quittner}, \emph{Entire and ancient solutions of a supercritical semilinear heat equation},
  Discrete Cont. Dynamical Syst. \textbf{41} (2021),  413--438.


\bibitem{qs}
{\sc P. Quittner} and {\sc Ph. Souplet}, Superlinear parabolic problems. Blow-up, global existence and steady states, Second edition, Birkh\"{a}user Advanced Texts,
Birkh\"{a}user, Basel, 2019.
\bibitem{quinteras2020}
{\sc P. Quittner}, \emph{Optimal Liouville theorems for superlinear parabolic problems},
	Duke Math. J. \textbf{170} (2021), 1113--1136.
\bibitem{Seki}{\sc Y. Seki}, \emph{Type II blow-up mechanisms in a semilinear heat equation with critical Joseph-Lundgren exponent},
J. Funct. Anal. \textbf{275} (2018), 3380--3456.

\bibitem{Se}
 {\sc J. Serrin}, \emph{Nonlinear elliptic equations of second order}, Lectures at AMS Symposium on Partial Differential
Equations, Berkeley, 1971.
\bibitem{Sou} {\sc P. Souplet}, \emph{Sur l'asymptotique des solutions globales pour une \'{e}quation de la chaleur semi-lin\'{e}aire
dans des domaines non born\'{e}s}, C. R. Acad. Sci. Paris S\'{e}r. I Math. \textbf{323} (1996), 877--882.
\bibitem{sourdis}
{\sc C. Sourdis}, \emph{On the uniqueness of solutions to a class of semilinear elliptic equations by
Serrin's sweeping principle}, Rend. Sem. Mat. Univ. Politec. Torino \textbf{74} (2016), 123--134.

\bibitem{ss}
{\sc C. Sourdis},
\emph{Linear non-degeneracy of the 1-D blow-up limit in the phase segregation of Bose-Einstein condensates},
J. Differential Equations
\textbf{265} (2018),   5676--5684.

\bibitem{sourggrok}
{\sc C. Sourdis}, \emph{A Liouville theorem for ancient solutions to a semilinear heat equation and its elliptic counterpart},
Electron. Res.
Arch. \textbf{29} (2021), 2829--2839.

\bibitem{sourdisCRM}
{\sc C. Sourdis},
\emph{Instability and nonordering of localized steady states to a classs of
reaction-diffusion equations in $\mathbb{R}^N$}, Comptes Rendus Math.
\textbf{359} (2021),   131--136.

\bibitem{sourdisProceedings}
{\sc C. Sourdis},
\emph{A one-dimensional symmetry result for entire solutions to the Fisher-KPP equation}, Proc. Amer. Math. Soc.
\textbf{149} (2021), 3347--3352.

\bibitem{expog}
{\sc J.I. Tello}, \emph{Stability of steady states of the Cauchy problem
for the exponential reaction-diffusion equation},
J. Math. Anal. Appl. \textbf{324} (2006), 381--396.\bibitem{velaz}
 {\sc J.J.L. Velazquez}, \emph{Curvature blow-up in perturbations of minimal cones evolving by mean
curvature flow}, Ann. Scuola Norm. Sup. Pisa Cl. Sci.   \textbf{21} (1994),  595--628.

\bibitem{wang}
 {\sc X. Wang}, \emph{On the Cauchy problem for reaction-diffusion equations}, Trans. Amer. Math. Soc. \textbf{337}
(1993), 549--590.


\bibitem{WangM}
{\sc X.-J. Wang},
\emph{Schauder estimates for elliptic and parabolic equations},
Chin. Ann. Math. Ser. B \textbf{27} (2006),  637--642.

\bibitem{kelei} {\sc K. Wang}, \emph{Partial regularity of stable solutions to the supercritical equations and its applications},
Nonlinear Anal.  \textbf{75} (2012), 5238--5260.

\bibitem{ZOU} {\sc H. Zou}, \emph{Symmetry of positive solutions of $\Delta u + u^p = 0$ in $\mathbb{R}^n$}, J. Differential  Equations  \textbf{120}
(1995), 46--88.

\end{thebibliography}
\end{document}